\def\subjclass#1{\par\medskip
\noindent\textbf{Mathematics Subject Classification (2010):} #1}
\def\keywords#1{\par\medskip
\noindent\textbf{Keywords.} #1}
\newcommand{\R}{{\mathbb R}}
\newcommand{\T}{{\mathbb T}}
\renewcommand{\S}{{\mathbb S}}
\newcommand\cB{{\mathcal B}}
\newcommand\cC{{\mathcal C}}
\newcommand\cO{{\mathcal O}}
\newcommand\cM{{\mathcal M}}
\newcommand\bT{{\mathbb T}}
\newtheorem{theorem}{Theorem}[section]
\newtheorem{prop}{Proposition}[section]
\newtheorem{corollary}{Corollary}[section]
\newtheorem{lemma}{Lemma}[section]
\newtheorem{definition}{Definition}[section]
\newenvironment{proof}{\noindent {\bf Proof.}}{ \hfill $\Box$\\ }
\newenvironment{proofof}[1]{\noindent {\bf Proof of #1.}}{ \hfill $\Box$\\ }
\newcommand\eps{\epsilon}
\newcommand{\rf}{r}
\def\eps{\varepsilon}
\def\Leb{\mbox{Leb}}
\title{On volume preserving almost Anosov flows}
\author{Henk Bruin
\thanks{Faculty of Mathematics, University of Vienna, 
Oskar Morgensternplatz 1, 1090 Vienna, Austria; {\it henk.bruin@univie.ac.at}.
}
}
\date{\today}
\begin{document}

\maketitle

\abstract{
The purpose of this paper is to establish limit laws
for volume preserving almost Anosov flows on $3$-three manifolds
having a neutral periodic of cubic saddle type. In the process, we derive estimates 
for the Dulac maps for cubic neutral saddles in planar vector fields.
}

\subjclass{
37C10,          
37D20,          
37D25,  	
60F05         
}

\keywords{Dulac map, almost Anosov flows, limit laws, stable laws, Central Limit Theorem, 
non-uniform hyperbolicity}

\section{Introduction}

A flow $\phi^t: \cM \times \R \to \cM$ on a (in our setting $3$-dimensional) compact differentiable manifold 
$\cM$ is called {\em Anosov} if its tangent bundle has a continuous flow-invariant mutually transversal
splitting into a neutral flow direction $E^c$, a hyperbolically stable direction
$E^s$ and a hyperbolically unstable direction  $E^u$.
The uniform hyperbolicity of such flows enables one to show various ergodic and statistical properties,
such as ergodicity (if the flow is topologically mixing)
and the Central Limit Theorem (CLT) for H\"older continuous observables.

We obtain an almost Anosov flow (see Definition~\ref{def:aaf} below) by inserting a neutral 
orbit $\Gamma \simeq \{ (0,0)\} \times \S^1$
near which the flow has the following form in local Euclidean coordinates:

\begin{equation}\label{eq:polyvf}
 \begin{pmatrix}
  \dot x \\ \dot y\\ \dot z
 \end{pmatrix}
 = 
 X \begin{pmatrix} x \\ y \\ z \end{pmatrix} 
 =
 \begin{pmatrix} x(a_0 x^2 + a_1 xy + a_2 y^2)) \\ 
 -y(b_0 x^2 + b_1 xy + b_2 y^2)) \\ 1 + w(x,y) \end{pmatrix}  + \cO(4)
\end{equation}
where $\cO(4)$ indicates terms of order four and higher, and the parameters satisfy 
\begin{equation}\label{eq:para}
 a_1, b_1 \in \R,\, a_0, a_2, b_0, b_2 \geq 0 \text{ with } \Delta := a_2b_0 - a_0b_2 \neq 0
\text{ and } c_1^2 < 4c_0c_2
\end{equation}
for $c_i := a_i+b_i,\ i = 0,1,2$. 
That is, the vector field is cubic in the transversal direction to $\Gamma$, but this is 
the only source of non-hyperbolicity.
Finally, $w$ is a linear combination of homogeneous functions in $x$ and $y$, vanishing at $(0,0)$.
Thus period of $\Gamma$ is its length.

The original motivation to study such system was to have a class of natural examples 
of non-uniformly hyperbolic invertible maps
(think of the Poincar\'e map on a section $\Sigma \subset \R^2 \times \{ 0 \}$ or the time-$1$ map
$f_{hor} = \phi_{hor}^1$ for the horizontal flow where only the $x$ and $y$ coordinates are taken into account:
\begin{equation}\label{eq:horivf}
\begin{pmatrix} \dot x \\ \dot y \end{pmatrix} 
= X_{hor}\begin{pmatrix} x \\ y \end{pmatrix} =
\begin{pmatrix} x(a_0x^2+a_1xy+a_2y^2) \\
 -y(b_0x^2+b_1xy+b_2 y^2)
\end{pmatrix} + \cO(4),
\end{equation}
with the restrictions \eqref{eq:para},
as natural examples where operator renewal theory can be applied to get 
precise statistical laws for the flow.
Initially, in \cite{BT17} for the parameter range $\beta_2 := \frac{a_2+b_2}{2b_2} \leq 1$ 
where $f_{hor}$ preserves an infinite Sinai-Bowen-Ruelle (SRB)  measure, we gave mixing rates for $C^1$ observables.
Later \cite{BTT18}, and more relevant to this paper, in the parameter range 
$\beta_2 > 1$ where the flow $\phi^t$ preserves a finite SRB-measure, we established limit laws (Stable Laws and the CLT with standard or 
non-standard scaling, depending on whether $\beta_2 \in (1,2)$, $\beta = 2$ or $\beta_2 > 2$).

All these results were obtained in the absence of {\em mixed terms}, i.e., $a_1=b_1=0$ in \eqref{eq:polyvf}.
This is of course not a natural assumption, and to our knowledge there is no change of coordinates that allows 
one to remove the mixed terms. In fact, if $c_1^2 > 4c_1c_2$, then the behaviour near the saddle is locally 
non-conjugate to the behaviour when $c_1^2 < 4c_1c_2$.

The purpose of this paper is to perform the analysis when mixed terms are present.
The crux of the analysis is the existence of a local first integral (and its explicit form when  
$\cO(4)$-terms are absent in
\eqref{eq:horivf2}), which allows us to reduce the ODE to dimension one.
We will show in Lemma~\ref{lem:mix_lf}
that the first integral $L$ can be found if
\begin{equation}\label{eq:condL}
\frac{b_1}{a_1} 
= \frac{b_2a_0+a_2+2b_0b_2}{b_2a_0+a_2+2a_0a_2}.
\end{equation}
This is a co-dimension one condition in parameter space. However, if we also stipulate that the flow $\phi^t$ 
is volume preserving,
we must assume that $\mbox{div}\, X = 0$ in \eqref{eq:polyvf}, which is equivalent to $\mbox{div}\, \cO(4) = 0$ together with
\begin{equation}\label{eq:divergencefree}
3a_0 = b_0, a_2 = 3b_2, a_1 = b_1.
\end{equation}
From these conditions, \eqref{eq:condL} follows automatically, and therefore
\eqref{eq:polyvf} describes a generic volume preserving almost Anosov flow with a single neutral 
periodic orbit of cubic saddle type. We present the results on limit laws in the volume preserving setting, 
see Corollary~\ref{cor:LimitLaws}.

Central to the proof is the analysis of the Dulac map near the neutral equilibrium of \eqref{eq:horivf}.
This means that we take an incoming and an outcoming transversal to the flow, in our case an unstable
leaf $W^u(0,\eta)$, $\eta \in [\eta_0, \eta_1]$, and a stable leaf $W^s(\zeta_0,0)$, see Figure~\ref{fig:vf},
and the {\em Dulac map} $D:W^u(0,\eta) \to W^s(\zeta_0,0)$ assigns the first intersection 
$\phi_{hor}^T(\eta, \xi_0)$ of the integral curve through $(\eta, \xi_0)$ with the outgoing 
transversal $W^s(\zeta_0,0)$, and the corresponding flow-time is denoted as $T$.
The main technical result of this paper are precise estimates of the Dulac map when
\eqref{eq:horivf} contains mixed terms, but using the assumption \eqref{eq:condL}.

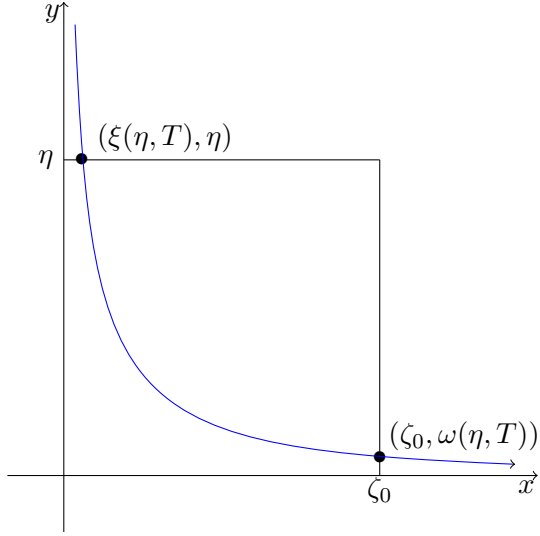
\begin{figure}[ht]
\begin{center}
\begin{tikzpicture}[scale=1.5]
\node at (4.1,-0.1) {\small $x$};
\node at (-0.1,4.1) {\small $y$};
\draw[->] (-0.5,0)--(4.2,0);
\draw[->] (0,-0.5)--(0,4.2);
\draw[-] (0,2.8)--(2.8,2.8); \node at (0.9,3) {\small $(\xi(\eta, T),\eta)$};
\node at (0.16,2.8) {\small $\bullet$}; 
\draw[-] (2.8,0)--(2.8,2.8); \node at (3.55,0.35) {\small $(\zeta_0,\omega(\eta, T))$};
\node at (2.8, 0.16) {\small $\bullet$}; 
\node at (-0.16,2.8) {\small $\eta$};
\node at (2.8, -0.13) {\small $\zeta_0$};
\draw[->, draw=blue] (0.1,4) .. controls (0.25,0.5) and (0.5,0.25) .. (4,0.1);
\end{tikzpicture}
\caption{The Dulac map $D:(\xi(\eta_0, T),\eta_0) \mapsto (\zeta_0,\omega(\eta_0, T))$ with Dulac time $T$.}
\label{fig:vf}
\end{center}
\end{figure} 

Dulac \cite{Dul} introduced his map as an ingredient to prove that polynomial vector fields 
in the plane have at most finitely many limit cycles, thus making a major
contribution to the solution of Hilbert's 16th problem.
\'Ecalle \cite{Ec92} and 
Il'yashenko \cite{I91} independently corrected some weak parts in Dulac's arguments, 
see also the summary in Roussarie's book \cite[Chapter 3 and Section 3.3]{R98}.
Hilbert's problem reduces to Dulac's problem, namely
that polycycles (i.e., heteroclinic saddle connections) 
cannot accumulated upon by limit cycles, and a crucial use of Dumortier's blow-up theorem
\cite{D77} allows one to restrict the attention to hyperbolic saddles.
More recent contributions in this direction are
by Marde\v{s}i\'c and collaborators \cite{MMV03, MMV08, MMSV15, MS07, Saav03}.

Our estimates only concern a single neutral saddle, and although for the purpose of Dulac's problem 
they can be treated by blow-ups, precise formulas for the Dulac times (and hence the Dulac map, see \eqref{eq:Dulac}), 
at cubic saddles in this generality seem to be new.

\subsection{Main results}\label{sec:main-results}

The crucial estimates here are of the Dulac times, i.e., the times that orbits take
to pass from an ``incoming'' unstable transversal to an ``outgoing'' unstable transversal 
to the flow, see Figure~\ref{fig:vf}.

\begin{theorem}\label{thm:perturb}
Consider a $C^3$ vector field of local form \eqref{eq:horivf} with parameters satisfying
\eqref{eq:para} and \eqref{eq:condL}.
Define
$$
\beta_0 := \frac{a_0+b_0}{2a_0}, \qquad  \beta_2 := \frac{a_2+b_2}{2b_2}, \qquad 
\beta_* = \frac{1}2\min\left\{ 1,\frac{a_2}{b_2}, \frac{b_0}{a_0}\right \}.
$$
Then there constants\footnote{The precise values of $\xi_0(\eta)$ and $\omega_0(\eta)$  are
given in in the proof Proposition~\ref{prop:mix_regvar}.} $\xi_0(\eta), \omega_0(\eta)$ 
such that the following asymptotics hold:
$$
\xi(\eta,\tilde T) = \xi_0(\eta) \tilde T^{-\beta_2} (1 + O(\tilde T^{-\beta_*}, T^{-\frac{1}{2}} \log T)).
$$ 
and
$$
\omega(\eta,\tilde T) = \omega_0(\eta) \tilde T^{-\beta_0} (1 + O(\tilde T^{-\beta_*}, T^{-\frac{1}{2}} \log T)).
$$ 
as $T \to \infty$.
\end{theorem}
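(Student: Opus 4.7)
My strategy combines the first integral $L$ from Lemma~\ref{lem:mix_lf} with the cubic homogeneity of the leading vector field, and treats the $\cO(4)$-term as a perturbation. Under \eqref{eq:condL}, the truncated (cubic-homogeneous) part of \eqref{eq:horivf} admits a first integral $L$; because the orbit equation $dy/dx$ of the truncated system is degree zero in $(x,y)$, I may take $L$ to be homogeneous of degree one in $(x,y)$ after a suitable power. The stable manifold of the saddle is the $y$-axis, on which the truncated dynamics reads $\dot y = -b_2 y^3$ and $\dot x = (a_2 y^2 + \cdots)x$. The explicit form of $L$ near this axis (coming from Lemma~\ref{lem:mix_lf}) then yields
\[
L(\xi,\eta) = \kappa_{\mathrm{in}}(\eta)\,\xi^{1/(2\beta_2)}(1+o(1)) \quad\text{as } \xi\to 0,
\]
and by the symmetric computation at the unstable manifold $\{y=0\}$, $L(\zeta_0,\omega) = \kappa_{\mathrm{out}}(\zeta_0)\,\omega^{1/(2\beta_0)}(1+o(1))$ as $\omega\to 0$. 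Thus $\xi$ and $\omega$ are each pinned, to leading order, by $\ell := L(\xi,\eta)$.

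Next, to compute the Dulac time $T(\ell)$ I would use the scaling $(x,y,t)\mapsto(\lambda x,\lambda y,\lambda^{-2}t)$ of the cubic vector field combined with $L\mapsto\lambda L$: this immediately produces $T(\ell) = \tau(\eta,\zeta_0)\,\ell^{-2}(1+o(1))$. A careful evaluation of $T = \int dx/\dot x$ along $\{L = \ell\}$, split into a ``near-stable'' regime, a ``core'' regime at scale $|(x,y)|\asymp \ell$, and a ``near-unstable'' regime, gives a sub-leading correction of size $\ell\,|\log\ell|$, which after inverting $\ell \asymp T^{-1/2}$ produces the $T^{-1/2}\log T$ error in the theorem. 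Substituting $\ell\asymp T^{-1/2}$ into the relations of the previous paragraph then yields $\xi = \xi_0(\eta) T^{-\beta_2}(1+\cdots)$ and $\omega = \omega_0(\eta) T^{-\beta_0}(1+\cdots)$, with explicit constants matching those of Proposition~\ref{prop:mix_regvar}.

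To handle the $\cO(4)$-perturbation, note that along the true orbit $\gamma$,
\[
\frac{d}{dt}L(\gamma(t)) = \nabla L(\gamma(t)) \cdot \cO(4), \qquad |\cO(4)|\lesssim |\gamma(t)|^4.
\]
Integrating along the transit with the time-density $dt\sim d|\gamma|/|\gamma|^3$ from the cubic scaling, and using $|\gamma|_{\min}\asymp \ell$, one obtains a relative drift $|\Delta\ell|/\ell = \cO(\ell^{2\beta_*})$. Translating this through $\ell\asymp T^{-1/2}$ and the exponent relations of the first paragraph yields the advertised error $\cO(\tilde T^{-\beta_*})$ on both $\xi$ and $\omega$.

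The main obstacle is this perturbation estimate: $\nabla L$ is singular at the coordinate axes, with strengths $\beta_2-\tfrac12$ at the $y$-axis and $\beta_0-\tfrac12$ at the $x$-axis, precisely where the orbit enters and exits. One has to split the transit into the three regimes and balance $\|\nabla L\|$ against $|\gamma|^4$ in each piece, in a way that simultaneously tracks the drift of $\gamma$ from an idealised level set of $L$ (a Gronwall-type argument, fuelled by the drift bound above). The explicit formula for $L$ from Lemma~\ref{lem:mix_lf} makes the book-keeping tractable, and the worst of the three contributions then produces the exponent $\beta_* = \tfrac12\min\{1,a_2/b_2,b_0/a_0\}$ stated in the theorem.
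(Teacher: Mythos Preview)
Your overall architecture---use the first integral $L$, exploit the cubic homogeneity via the scaling $(x,y,t)\mapsto(\lambda x,\lambda y,\lambda^{-2}t)$, and treat the $\cO(4)$ part as a perturbation controlled on three regimes---matches the paper's in spirit. The paper's unperturbed analysis (Proposition~\ref{prop:mix_regvar}) uses the angular variable $M=y/x$ rather than your $\ell=L^{1/(u+v+2)}$, but these are equivalent ways of exploiting the same homogeneity, and your relations $L\sim\kappa_{\mathrm{in}}(\eta)\xi^{1/(2\beta_2)}$ and $T\sim\tau\ell^{-2}$ are correct.

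There are, however, two concrete problems with the perturbation step. First, you place the $T^{-1/2}\log T$ error in the \emph{unperturbed} Dulac-time computation. This is wrong: Proposition~\ref{prop:mix_regvar} shows that the unperturbed system gives $\xi(\eta,T)=\xi_0(\eta)T^{-\beta_2}(1+O(T^{-1}))$, with no logarithm. The $T^{-1/2}\log T$ in the theorem arises only from the $\cO(4)$ perturbation, specifically from the choice of an intermediate scale $\rho(\delta)=\delta\log(1/\delta)$ separating the ``core'' from the tails in the paper's three-part time splitting.

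Second, and more seriously, your key claim $|\Delta\ell|/\ell=\cO(\ell^{2\beta_*})$ does not follow from the Gronwall-type argument you sketch, and in fact is false as stated. Along the incoming segment (near the $y$-axis), a direct computation gives
\[
\frac{d}{dy}\log L(\tilde x(y),y)=-\frac{B\hat A-A\hat B}{yPB}\Big|_{(\tilde x,y)},
\]
which is homogeneous of degree zero, hence a function of $\tilde x/y$ alone. For $\tilde x/y$ small this tends to a generically non-zero constant, so integrating from $y=\eta$ down to the core contributes a drift of size $O(\eta)$---fixed, not decaying with $\ell$. A crude $\int|\nabla L\cdot\cO(4)|\,dt$ bound therefore only yields $|\Delta\ell|/\ell=O(\eta)$, which is useless for the error terms in the theorem. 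The paper sidesteps this by constructing a genuine perturbed first integral $\tilde L$ (normalised to agree with $L$ on the diagonal $\{x=y\}$) and working with the \emph{ratio} $(1+\psi(x,y))/(1+\psi(\xi,\eta))$ along the perturbed orbit; since $\tilde L$ is exactly conserved, this ratio equals $L(\xi,\eta)/L(x,y)$, and what must be controlled is only its variation across the core, where both endpoints are at distance $O(\rho(\delta))$ from the diagonal. That localisation is what produces the advertised $O(\delta^{a_*})$ and the exponent $\beta_*$; your sketch does not isolate it.
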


In particular, the functions $\xi$ and $\omega$ are {\em regularly varying} of order $\beta_2$ in $T$, that is
$\lim_{T\to\infty} \frac{\xi(\eta,cT)}{\xi(\eta,T)} = c^{\beta_2}$ for every $c > 0$ and analogous for $\omega(\eta,T)$.
Moreover, the Dulac map  $D:W^u(0,\eta) \to W^s(\zeta_0,0)$ itself has the form (as $\xi \to 0$)
\begin{equation}\label{eq:Dulac}
\omega = D(\xi) = \omega_0(\eta) \xi_0(\eta)^{-\frac{\beta_0}{\beta_2} }
\, \xi^{\frac{\beta_0}{\beta_2} } \left( 1+\cO(\xi^{\frac{\beta_*}{\beta_2}},
-\xi^{\frac{1}{2\beta_2}} \log \xi ) \right).
\end{equation}

With assumptions \eqref{eq:divergencefree} and $c_1^2 < 4c_0 c_2$ in place, we can use the change of coordinates
$\bar x = \sqrt{a_0} x, \ \bar y = \sqrt{b_2}y$ and $\gamma = a_1/\sqrt{a_0 b_2} \in (-4,4)$ 
to transform \eqref{eq:polyvf} into the one-parameter family
\begin{equation}\label{eq:reducedvf}
\begin{pmatrix}
\dot{\bar x} \\ \dot{\bar y} \\ \dot{\bar x} 
\end{pmatrix}
 = 
\begin{pmatrix}
\bar x(\bar x^2+ \gamma \bar x \bar y + 3 \bar  y^2)) \\
-\bar y(3 \bar x^2 + \gamma \bar x \bar y + \bar y^2)) \\
1+\bar w(\bar x, \bar y)
\end{pmatrix} + \cO(4).
\end{equation}
for some transformed function $\bar w$.

Because of this genericity and reduced number of technicality that Lebesgue measure gives as opposed to
SRB-measure, we state our statistical result for volume preserving flows.
Theorem~\ref{thm:perturb} is used to estimate the measures of the strips $\{ \varphi = n\}$, see Figure~\ref{fig:leaves},
which in turn, together with the spectral properties of an induced Poincar\'e map $\hat f$ are 
crucial ingredients for the analysis required to establish the following stochastic limit properties
of the flow $\phi^t$.

\begin{corollary}\label{cor:LimitLaws}
 Consider a volume preserving almost Anosov flow \eqref{eq:reducedvf} on $\cM$ with $\gamma \in (-4,4)$
 and an observables $v:\cM \to \R$ that is $C^1$ on $\cM \setminus \Gamma$
 and has the form $v = v_0 + o(\rho)$ where $\int_0^{\tau} v_0 \circ \phi^t \, dt$ is 
 homogeneous of order $\rho \in (-2,1)$ in local coordinates $(x,y)$ near $p$ and $o(\rho)$ stands for terms of order $> \rho$.
 \begin{enumerate}
  \item If $\rho = 0$, then $v$  satisfies the Central Limit Theorem with non-standard scaling $\sqrt{t \log t}$, i.e.,
 $$
 \frac{\int_0^t v \circ \phi^s \, ds - t \int v \ dVol}{\sqrt{t \log t}} \Rightarrow_{dist} {\mathcal N}(0,\sigma^2)
 \quad \text{ as } t\to\infty,
 $$
 and the variance $\sigma^2 > 0$ unless $\int_0^\tau v \circ \phi^t \, dt$ is a coboundary.
 \item If $\rho > 0$, then $v$ satisfies the Gaussian Central Limit Theorem, i.e., with standard scaling $\sqrt{t}$.
 \item If $\rho \in (-2, 0)$ then $v$ satisfies a Stable Law of order $\frac{4}{2-\rho} \in (1,2)$.
 \end{enumerate}
\end{corollary}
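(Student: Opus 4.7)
The strategy is to reduce the continuous-time limit laws for $\phi^t$ to discrete-time limit laws for Birkhoff sums of an induced observable under a uniformly hyperbolic Poincar\'e map, with Theorem~\ref{thm:perturb} controlling both the return time and the induced observable. Fix a Poincar\'e section $\Sigma$ transverse to $\phi^t$, contained in the complement of a tubular neighborhood of $\Gamma$. Since $\Gamma$ is the only source of non-hyperbolicity, the first-return map $\hat f:\Sigma\to\Sigma$ is uniformly hyperbolic, and the large values of the return time $\varphi$ are concentrated on a sequence of strips $\{\varphi=n\}$ consisting of points whose orbits pass through the Dulac tube around $\Gamma$. In the volume-preserving case \eqref{eq:divergencefree} one computes $\beta_0=\beta_2=2$, so Theorem~\ref{thm:perturb} gives $\xi(\eta,T)\asymp T^{-2}$ uniformly in $\eta$, whence
\[
\mu(\{\varphi>T\})\asymp T^{-2}, \qquad \mu(\{\varphi=n\})\asymp n^{-3},
\]
where $\mu$ is Lebesgue on $\Sigma$. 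Combining the same Dulac asymptotics with the homogeneity of order $\rho$ of $\int_0^\tau v_0\circ\phi^t\,dt$ gives the tail of the induced observable $V(x)=\int_0^{\varphi(x)}v\circ\phi^t(x)\,dt$: a saddle passage of duration $n$ has minimum distance $\delta\asymp n^{-1/2}$ from $\Gamma$, so $V\asymp n\cdot\delta^{\rho}\asymp n^{1-\rho/2}$ on $\{\varphi=n\}$, and combining this with the strip measure yields
\[
\mu(\{|V|>t\})\asymp t^{-4/(2-\rho)} \quad\text{for } \rho\in(-2,0),
\]
while $V\in L^2(\mu)$ for $\rho\geq 0$, with the marginal behaviour $\int V^2\,\mathbf{1}_{\{|V|\leq t\}}\,d\mu\asymp \log t$ at $\rho=0$.

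Next, the uniform hyperbolicity of $\hat f$ produces a spectral gap of its transfer operator on an appropriate anisotropic (or quasi-H\"older) Banach space, and the operator-renewal and stable-law machinery for inducing schemes \`a la Gou\"ezel, Melbourne--Terhesiu applies to the Birkhoff sums $S_nV=\sum_{k=0}^{n-1}V\circ\hat f^k$. For $\rho>0$ one has $V\in L^{2+\delta}$ and a martingale CLT delivers a Gaussian limit at rate $\sqrt n$. For $\rho=0$, the marginal $L^2$ regime corresponds to the critical Aaronson--Denker type CLT with non-standard scaling $\sqrt{n\log n}$; the asymptotic variance is strictly positive unless $V$ is a measurable coboundary, which for a smooth flow observable is equivalent to $\int_0^\tau v\circ\phi^t\,dt$ being a coboundary. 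For $\rho\in(-2,0)$, the regularly-varying tail places $V$ in the domain of normal attraction of the $\alpha$-stable law with $\alpha=4/(2-\rho)\in(1,2)$, and the stable-law inducing framework gives the convergence of $S_nV/n^{1/\alpha}$. The standard lifting of limit laws from a hyperbolic Poincar\'e map to its suspension flow (in the volume-preserving setting) finally translates each of these discrete statements into the corresponding continuous-time statement (1)--(3) of the Corollary.

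The principal obstacle is the first step: one needs to upgrade the leading Dulac asymptotics of Theorem~\ref{thm:perturb} into \emph{sharp}, regularly-varying tail estimates for both $\varphi$ and $V$, uniform in the entrance coordinate $\eta$, with constants integrable in $\eta$, so that the stable-law limit in the range $\rho\in(-2,0)$ is non-degenerate and the constant in the non-standard CLT at $\rho=0$ is explicit; the error terms $\cO(\tilde T^{-\beta_*},T^{-1/2}\log T)$ in Theorem~\ref{thm:perturb} are exactly what is needed, but they must be translated carefully through the change of variable $\xi\leftrightarrow\varphi$. A secondary technicality is that the induced observable $V$ is unbounded near $\Gamma$ of order $\rho/2$ in the transverse distance, so one must verify that the anisotropic function space on which $\hat f$ has a spectral gap is large enough to accommodate $V$; here the homogeneity hypothesis on $\int_0^\tau v_0\circ\phi^t\,dt$, rather than mere H\"olderness of $v$, is precisely what provides the required structural control.
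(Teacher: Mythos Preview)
Your proposal is correct and follows essentially the same route as the paper: compute the tail of the induced observable $\bar v=\int_0^\tau v\circ\phi^t\,dt$ via the Dulac asymptotics (obtaining $\bar v\sim C\,T^{1-\rho/2}$ and hence $\Leb(\bar v>t)\sim C't^{-4/(2-\rho)}$ since $\beta_2=2$), then feed this into the spectral-gap/inducing machinery to get the three limit regimes, and lift to the flow.

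The only substantive difference is packaging. The paper's proof is a two-paragraph application of \cite[Theorem~2.7]{BTT18}, which already contains the anisotropic Banach space construction, the operator-renewal argument, and the lifting from the induced map to the suspension flow; what you sketch as separate steps (spectral gap for $\hat f$, Gou\"ezel/Melbourne--Terhesiu machinery, suspension lifting) is exactly the content of that cited theorem. Your ``principal obstacle'' about sharp regularly-varying tails is handled in the paper not by your heuristic $V\asymp n\cdot\delta^\rho$ but by Proposition~\ref{prop:r-integral}, which computes $\int_{W(T)} r^\rho\,dt\sim C\,T^{1-\rho/2}$ directly from the change of variables $M=y/x$ already used in Proposition~\ref{prop:mix_regvar}; this is what makes the constants explicit and the error terms controlled. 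Your secondary worry about whether $V$ lives in the right Banach space is absorbed into the hypotheses of \cite[Theorem~2.7]{BTT18} and is not revisited here.
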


Theorem~\ref{thm:perturb} allows also to derive other limit theorems such as in the 
infinite measure setting of \cite{BT17}, but with mixed terms.
But since we restrict to the Lebesgue measure (rather than SRB-measure) preserving case, we don't give any further details.

\subsection{Set-up}\label{sec:set-up}

The set-up here is largely taken over from \cite{BTT18}.
Our phase space will be the $3$-dimensional compact manifold $\cM$.

\begin{definition}{~\cite[Definition 1]{Hu00}}\label{def:aaf}
\label{def-AlmAn} 
A diffeomorphism $f:{\bT}^2\to {\bT}^2$ is called {\em almost Anosov} if there exists two continuous 
families of non-trivial cones $x\to\cC_x^u, \cC_x^s$ such that except for a finite set $S$,
\begin{itemize}
\item[i)] $D f_x\cC_{x}^u\subseteq \cC_{f(x)}^u$ and $D f_x\cC_{x}^s\supseteq \cC_{f(x)}^s$;
\item[ii)] $|D f_x v|>|v|$ for any $0 \neq v\in \cC_x^u$ and $|D f_x v|<|v|$ for any $0 \neq v\in \cC_x^s$.
\end{itemize}
For $x \in S$, $Df_x$ is the identity. 

A flow $f^t$ on $3$-torus $\T^3$ is called {\em almost Anosov flow} if it has a finite set $S$ of neutral 
periodic orbits, but everywhere else
observes the condition of an Anosov flow in that there is a continuous splitting of the
tangent bundle into a stable, an unstable and a neutral (flow) direction. For $x \in S$, the
derivative at the return time $\tau$ is $Df^\tau_x$ is the identity.
\end{definition}

The time-$1$ map $f$ of the flow $\phi^t$ of \eqref{eq:polyvf} 
has the form of a skew-product
\begin{equation}\label{eq:time1}
f\begin{pmatrix} x \\  y \\ z \end{pmatrix} =
\begin{pmatrix}
 x(1+a_0x^2+a_1 xy + a_2y^2) \\
 y(1-b_0x^2-b_1 xy - b_2y^2) \\
 z + O(|w(x,y)|)
\end{pmatrix} + \cO(3),
\end{equation}
see \cite[Section 2.1]{BT17}. Restricted to the $(x,y)$-coordinates, this map $f_{hor}$ is a smooth  
almost Anosov map with a single neutral fixed point $p = (0,0)$.
Let $\{ P_i \}_{i = 0}^k$ be the Markov partition for $f_{hor}$ (which we can assume to exist since $f_{hor}$ is a local
perturbation of a Anosov diffeomorphism on $\bT^2$).
We assume that $p$ belongs to the interior of $P_0$.
Clearly, the horizontal and vertical axes are the unstable and stable manifolds of $p$ respectively.
We assume that the Markov partition element $P_0 \subset U$ is a small rectangle such that 
$\overline{f_{hor}^{-1}(P_0) \cup P_0 \cup f_{hor}(P_0)} \subset U$.
Due to the symmetries $(x,y) \mapsto (\pm x, \pm y)$,
it suffices to do the analysis only in the first quadrant 
$Q = [0,\zeta_0] \times [0,\eta_0]$ of $P_0$, see Figure~\ref{fig:leaves}.
Without loss of generality (see \cite[Lemma 2.1]{BT17})
we can think of $[0, \zeta_0] \times \{ \eta_0 \}$ as a local unstable leaf
and $\{ \zeta_0\} \times [0, \eta_0]$ as a local stable leaf of the global diffeomorphism.

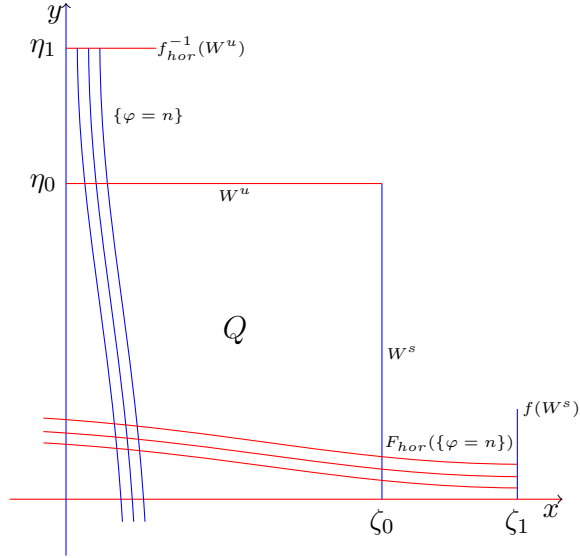
\begin{figure}[ht]
\begin{center}
\begin{tikzpicture}[scale=1.5]
\node at (4.3,-0.1) {\small $x$}; \node at (2.8,-0.2) {\small $\zeta_0$}; \node at (4,-0.2) {\small $\zeta_1$}; 
\node at (-0.1,4.3) {\small $y$}; \node at (-0.2,2.8) {\small $\eta_0$};\node at (-0.2,4) {\small $\eta_1$};
\draw[->, draw=red] (-0.5,0)--(4.4,0);
\draw[->, draw=blue] (0,-0.5)--(0,4.4);
\node at (0.73,3.4) {\tiny $\{ \varphi = n\}$};
\node at (3.4,0.5) {\tiny $F_{hor}(\{\varphi = n\})$};
\node at (1.5,1.5) {$Q$};
\node at (1.5,2.7) {\tiny $W^u$}; \node at (1.2,4) {\tiny $f_{hor}^{-1}(W^u)$};
\node at (2.98,1.3) {\tiny $W^s$}; \node at (4.3,0.8) {\tiny $f(W^s)$};
%
\draw[-, draw=red] (0,4)--(0.8,4); \draw[-, draw=blue] (4,0)--(4,0.8); 
\draw[-, draw=blue] (0.1,4) .. controls (0.11,2.5) and (0.4,1.5) .. (0.5,-0.2);
\draw[-, draw=blue] (0.2,4) .. controls (0.21,2.5) and (0.5,1.5) .. (0.6,-0.2);
\draw[-, draw=blue] (0.3,4) .. controls (0.31,2.5) and (0.6,1.5) .. (0.7,-0.2);
\draw[-, draw=blue] (2.8,2.8)--(2.8,-0.0);
\draw[-, draw=red] (4, 0.1) .. controls (2.5, 0.11) and (1.5, 0.4) .. (-0.2, 0.5);
\draw[-, draw=red] (4, 0.2) .. controls (2.5, 0.21) and (1.5, 0.5) .. (-0.2, 0.6);
\draw[-, draw=red] (4, 0.31) .. controls (2.5, 0.32) and (1.5, 0.61) .. (-0.2, 0.72);
\draw[-, draw=red] (2.8,2.8)--(-0.0,2.8);
\end{tikzpicture}
\caption{The first quadrant $Q$ of the rectangle $P_0$, with stable and unstabe foliations
drawn vertically and horizontally, respectively.}
\label{fig:leaves}
\end{center}
\end{figure} 

We consider an induced map $F_{hor} = f_{hor}^\varphi: Y \to Y$ for $Y := {\bT}^2 \setminus P_0$, where
$$
\varphi(z) = \min\{ n \geq 1 : f_{hor}^n(z) \notin P_0\}
$$
is the first return time to $Y$. Note that $F_{hor}$ is invertible because $f_{hor}$ is.
In the first quadrant of $U \setminus P_0$, $\{ \varphi = n \} := \{ z \in f^{-1}(Q) \setminus Q : \varphi(z) = n\}$, 
$n \geq 2$, are vertical strips 
adjacent to the local unstable leaf $[0,\zeta_0] \times \{\eta_0 \}$, and 
converging to $\{ 0 \} \times [\eta_0, \eta_1]$
as $n \to \infty$. The images
$F_{hor}(\{ \varphi = n\})$ are horizontal strips, adjacent to the local stable leaf $\{ \zeta_0 \} \times [0,\eta_0]$, 
and converging to $[\zeta_0, \zeta_1] \times \{ 0 \}$ as $n \to \infty$,
see Figure~\ref{fig:leaves}.

In contrast to $f_{hor}$, the induced map $F_{hor}$ is uniformly hyperbolic, but only piecewise continuous.
Indeed, continuity fails at the boundaries of the strips $\{ \varphi = n\}$, $n \geq 2$
(and $F$ is undefined on $W^s(p)$),
but these boundaries are local stable and unstable leaves,
and it is possible to create a countable Markov partition refining
$\{ P_i \}_{i=1}^k$ of $Y$ for $F$,
in which all the strips $\{ \varphi = n \}$ are partition elements.

\section{Regular variation of $\mu(\varphi > n)$ with mixed terms}\label{sec:regvarmixed}

In this section, we allow quadratic mixed terms in \eqref{eq:horivf}, but for the moment leave out the $\cO(4)$-terms.
That is, we consider 
\begin{equation}\label{eq:horivf2}
\begin{cases}
\dot x = x(a_0x^2+a_1xy+a_2y^2), \\
\dot y = -y(b_0x^2+b_1xy+b_2 y^2),
\end{cases} 
\end{equation}
that is, \eqref{eq:horivf} without the $\cO(4)$ terms but with the restrictions \eqref{eq:para} and \eqref{eq:condL}.
The condition $c_1^2 < 4c_0c_2$ avoids the formation of invariant lines $y = px$,
but in the below proofs it is used to guarantee that expressions as
$c_0 + c_1 M + c_2 M^2$ for $M = y/x$ are positive.
Our exposition closely follows \cite{BT17}, but  since the mixed terms require slight adjustments 
throughout the proof, we will give it in full.

Let $u,v \in \R$ be the solutions of the linear equations
\begin{equation}\label{eq:mix_uv}
\begin{cases}
(u+2) a_0 = v b_0 \\
(v+2) b_2 = u a_2
\end{cases}
\quad \text{ that is:}\quad
\begin{cases}
u = \frac{2b_2 c_0}{\Delta}, \\[2mm]
v = \frac{2a_0 c_2}{\Delta}.
\end{cases}
\end{equation}
Note that $u,v$ and $\Delta$ (recall $\Delta \neq 0$) all have the same sign and 
\eqref{eq:condL} implies that $\frac{b_1}{a_1} = \frac{u+1}{v+1}$. 
Compute that
\begin{equation}\label{eq:mix_gamma}
\beta_0 := \frac{a_0+b_0}{2a_0} = \frac{u+v+2}{2 v}, \quad 
\beta_2 := \frac{a_2+b_2}{2b_2} = \frac{u+v+2}{2 u}, \quad \frac{\beta_0}{\beta_2} = \frac{u}{v},
\end{equation}
and note that $\beta_0, \beta_2 > \frac{1}2$ (or $=\frac{1}2$ if 
we allow $b_0=0$ or $a_2=0$ respectively).
Under the extra assumption \eqref{eq:divergencefree} we obtain $\beta_0 = \beta_2 = 2$ and $u = v = 1$.

The first estimates is about the Dulac map of \eqref{eq:horivf}.

\begin{prop}\label{prop:mix_regvar}
Consider a vector field on the $2$-torus with local form \eqref{eq:horivf}
for $a_0, a_2, b_0, b_2 \geq 0$ and $\Delta \neq 0$.
There are functions $\xi_0(\eta), \omega_0(\eta), \xi_1(\eta), \omega_1(\eta) > 0$ 
independent of $T$ (with exact expressions given in the proof) 
such that 
$$
\xi(\eta, T) = \xi_0(\eta) T^{-\beta_2} \left(1 - \xi_1(\eta) T^{-1} + O(T^{-2}, T^{-2 \beta_2} )\right)
$$
and 
$$
\omega(\eta, T) = \omega_0(\eta) T^{-\beta_0} \left(1 - \omega_1(\eta) T^{-1} + O(T^{-2}, T^{-2 \beta_0} )\right).
$$
\end{prop}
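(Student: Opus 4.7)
The plan is to reduce the two-dimensional Dulac problem to a one-dimensional quadrature via the first integral provided by Lemma~\ref{lem:mix_lf}, and then to extract precise asymptotics from the resulting integral. I use Lemma~\ref{lem:mix_lf} to write a first integral in the form $L(x,y) = x^u y^v G(x,y)$, with $G(x,y) = c_0 x^2 + g_1 xy + c_2 y^2$, $u,v$ from \eqref{eq:mix_uv}, and $g_1$ determined by \eqref{eq:condL}. Along orbits in the first quadrant I parametrize by $M := y/x$; from \eqref{eq:horivf2} one computes $\dot M = -M\, x^2(c_0 + c_1 M + c_2 M^2)$, which is strictly negative under $c_1^2 < 4c_0c_2$, so $M$ decreases monotonically from $M_0 := \eta/\xi$ to $M_1 := \omega/\zeta_0$. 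The first-integral relation $L = x^{u+v+2}\, M^v(c_0 + g_1 M + c_2 M^2)$ inverts to give $x$ as an explicit function of $M$ and $L$. Substituting into $dt = dM/\dot M$ and using $2/(u+v+2) = 1/(u\beta_2)$ from \eqref{eq:mix_gamma} produces
\[
T = L^{-1/(u\beta_2)}\, I(M_0,M_1), \qquad I(M_0,M_1) := \int_{M_1}^{M_0} \frac{(c_0+g_1 M+c_2 M^2)^{1/(u\beta_2)}}{M^{\,1 - v/(u\beta_2)}(c_0+c_1 M+c_2 M^2)}\, dM.
\]

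A direct exponent count at $0$ and $\infty$ (using $u,v>0$) shows the integrand of $I$ is integrable on $(0,\infty)$, so $I_\infty := \lim_{M_0\to\infty,\,M_1\to 0} I(M_0,M_1)$ is a positive finite constant. Combining the leading relation $T \sim I_\infty L^{-1/(u\beta_2)}$ with the boundary expansions $L(\xi,\eta) = c_2\,\eta^{v+2}\,\xi^u\bigl(1 + (g_1/c_2)(\xi/\eta) + (c_0/c_2)(\xi/\eta)^2\bigr)$ and $L(\zeta_0,\omega) = c_0\,\zeta_0^{u+2}\,\omega^v\bigl(1 + (g_1/c_0)(\omega/\zeta_0) + (c_2/c_0)(\omega/\zeta_0)^2\bigr)$, I obtain $\xi(\eta,T) \sim \xi_0(\eta) T^{-\beta_2}$ and $\omega(\eta,T) \sim \omega_0(\eta) T^{-\beta_0}$, with $\xi_0(\eta) = I_\infty^{\beta_2}(c_2\eta^{v+2})^{-1/u}$ and an analogous formula for $\omega_0$.

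For the subleading $T^{-1}$ correction I expand the tails $I_\infty - I(M_0,M_1)$ at both endpoints. A Laurent expansion of the integrand at $M=\infty$ yields $\int_{M_0}^{\infty} \cdots\, dM = C_\infty M_0^{-1/\beta_2}(1+O(M_0^{-1}))$, and at $M=0$ yields $\int_0^{M_1} \cdots\, dM = C_0\, M_1^{1/\beta_0}(1+O(M_1))$, for explicit constants $C_\infty, C_0$. With $M_0 = \eta/\xi \sim (\eta/\xi_0(\eta))\, T^{\beta_2}$ and $M_1 = \omega/\zeta_0 \sim (\omega_0(\eta)/\zeta_0)\, T^{-\beta_0}$, both tails contribute $O(T^{-1})$. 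Feeding these, together with the subleading $(g_1/c_2)(\xi/\eta)$ and $(g_1/c_0)(\omega/\zeta_0)$ corrections to $L$, through one round of successive approximation in $T = L^{-1/(u\beta_2)} I(M_0,M_1)$ produces the expansions stated in the proposition, with $\xi_1(\eta), \omega_1(\eta)$ coming out as explicit combinations of $C_\infty, C_0, I_\infty, g_1, c_0, c_2$.

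The main obstacle is this last bookkeeping: the small parameters $T^{-1}$, $T^{-\beta_2}$, $T^{-\beta_0}$ enter at comparable orders, and one must verify that after extracting the explicit $-\xi_1(\eta)T^{-1}$ term the remainder is genuinely $O(T^{-2}, T^{-2\beta_2})$ (resp.\ $O(T^{-2}, T^{-2\beta_0})$). This requires carefully matching the tail coefficient $C_\infty$ against the first-order Taylor correction $(g_1/c_2)\xi/\eta$ in $L(\xi,\eta)$, for which the explicit form of $g_1$ from \eqref{eq:condL} will be essential.
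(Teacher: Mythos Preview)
Your reduction to a one-dimensional quadrature via the first integral $L$ and the substitution $M=y/x$ is exactly the paper's approach; the integral you write down is the same as the paper's equation \eqref{eq:mix_M4} once you note (and you should make this explicit) that condition \eqref{eq:condL} forces your $g_1$ to equal $c_1$, so the two quadratics in your integrand coincide. This is precisely the content of the computation \eqref{eq:mix_rewrite} in the paper, and without it your integrand is needlessly carrying two polynomials and the subsequent bookkeeping is harder than necessary.

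Where you diverge from the paper is in extracting the subleading $T^{-1}$ term. The paper does not expand the tails of $I(M_0,M_1)$ directly; instead it differentiates the implicit relation \eqref{eq:mix_M4} with respect to $T$, uses the conservation law $L(\xi,\eta)=L(\zeta_0,\omega)$ to eliminate $\omega'$ in favour of $\xi'$, and arrives at a scalar ODE of the form $g'/g=(\xi_1(\eta)+o(1))T^{-2}$ for $g(\eta,T)=\xi(\eta,T)T^{\beta_2}$. Integrating this from $T$ to $\infty$ yields the expansion with the closed form $\xi_1(\eta)=\tfrac{\beta_2}{2}\bigl(\tfrac{1}{a_0\zeta_0^{2}}+\tfrac{1}{b_2\eta^{2}}\bigr)$ directly, without ever computing your constants $C_0,C_\infty$. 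Your tail-expansion method is equally legitimate and in some ways more transparent, but it requires you to compute those tail constants and then combine them with the subleading Taylor terms of $L(\xi,\eta)$ and $L(\zeta_0,\omega)$; the paper's ODE trick sidesteps this and packages the $\zeta_0$-dependence automatically. Either route reaches the same $\xi_1$, and the coupling between $\xi$ and $\omega$ that you invoke via $L(\xi,\eta)=L(\zeta_0,\omega)$ is exactly what the paper uses when it converts $\omega'$ to $\xi'$.

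Your last paragraph correctly identifies the genuine sticking point: the first-order correction $(c_1/c_2)(\xi/\eta)$ in $L(\xi,\eta)$ produces, after raising to the power $1/(u\beta_2)$, a term of order $T^{-\beta_2}$ in the implicit equation, and when $\beta_2<1$ this is \emph{larger} than the $T^{-1}$ tail corrections. You say this ``requires carefully matching $C_\infty$ against'' that Taylor correction, suggesting a cancellation, but in fact there is no such cancellation in general (the two contributions live at different orders $T^{-1}$ and $T^{-\beta_2}$). The paper's ODE derivation faces the same issue --- the $c_1\xi\eta$ terms in its intermediate expressions are $O(T^{-\beta_2})$, not $O(T^{-2\beta_2})$ as the stated error bounds suggest --- so this is a delicacy shared by both arguments rather than a flaw specific to yours. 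For the volume-preserving application ($\beta_0=\beta_2=2$) the point is moot.
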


\begin{lemma}\label{lem:mix_lf} The function
\begin{equation}\label{eq:mix_lf}
L(x,y) = 
\begin{cases}
  x^u y^v ( \frac{a_0}{v}\ x^2  + \frac{a_1}{v+1} xy + \frac{b_2}{u}\ y^2 ) & \text{ if } \Delta > 0,\\
  x^{-u} y^{-v} ( \frac{a_0}{v}\ x^2  + \frac{a_1}{v+1} xy \frac{b_2}{u}\ y^2 )^{-1} & \text{ if } \Delta < 0,
\end{cases}
\end{equation}
is a first integral of \eqref{eq:horivf}.
\end{lemma}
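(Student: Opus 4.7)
The plan is a direct verification that $\nabla L \cdot X \equiv 0$, where $X$ is the planar vector field \eqref{eq:horivf2}. I carry out the computation only for the $\Delta>0$ case; the $\Delta<0$ formula for $L$ is nothing but the reciprocal of the $\Delta>0$ expression (now with the exponents $u,v$ taking the opposite sign), and a smooth nonvanishing function is a first integral iff its reciprocal is, so the same computation covers both branches.

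Write $L = x^u y^v Q$ with $Q(x,y) := \frac{a_0}{v} x^2 + \frac{a_1}{v+1} xy + \frac{b_2}{u} y^2$, and set $P_a := a_0 x^2 + a_1 xy + a_2 y^2$, $P_b := b_0 x^2 + b_1 xy + b_2 y^2$, so that $\dot x = x P_a$ and $\dot y = -y P_b$. Applying the product rule to $\partial_x L$ and $\partial_y L$ and factoring out $x^u y^v$, one obtains
$$
\dot L \;=\; x^u y^v \Bigl[\, P_a\,(uQ + x\,\partial_x Q) \;-\; P_b\,(vQ + y\,\partial_y Q)\,\Bigr].
$$
The key step is to show that each Euler-type combination inside brackets simplifies exactly to one of $P_a$ and $P_b$.

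A short expansion yields
$$
uQ + x\,\partial_x Q \;=\; \frac{(u+2)a_0}{v}\,x^2 \;+\; \frac{(u+1)a_1}{v+1}\,xy \;+\; b_2\,y^2 .
$$
The leading coefficient reduces to $b_0$ via the defining relation $(u+2)a_0 = v b_0$ of \eqref{eq:mix_uv}; the middle coefficient reduces to $b_1$ precisely by the standing hypothesis \eqref{eq:condL}, used in the equivalent form $b_1/a_1 = (u+1)/(v+1)$; hence $uQ + x\,\partial_x Q = P_b$. Symmetrically, using $(v+2)b_2 = u a_2$ from \eqref{eq:mix_uv}, an identical calculation gives $vQ + y\,\partial_y Q = P_a$.

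Substituting back, the bracketed expression in the formula for $\dot L$ becomes $P_a P_b - P_b P_a = 0$, hence $\dot L \equiv 0$ on the open first quadrant, which suffices by analyticity. The argument has no real obstacle beyond bookkeeping: the two relations in \eqref{eq:mix_uv} are engineered so that the $x^2$ and $y^2$ monomials align, and \eqref{eq:condL} is exactly the additional scalar condition that forces the mixed $xy$ term to align as well. This explains both why the first integral $L$ takes the weighted form it does and why condition \eqref{eq:condL} is the sharp one.
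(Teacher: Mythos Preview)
Your proof is correct and follows essentially the same approach as the paper: a direct verification that the Lie derivative $\dot L = \langle \nabla L, X\rangle$ vanishes, using the relations \eqref{eq:mix_uv} together with $\frac{b_1}{a_1}=\frac{u+1}{v+1}$ from \eqref{eq:condL}, and then handling $\Delta<0$ by passing to the reciprocal. Your organization is arguably cleaner, in that you isolate the two identities $uQ + x\,\partial_x Q = P_b$ and $vQ + y\,\partial_y Q = P_a$, which makes the cancellation $P_a P_b - P_b P_a = 0$ immediate; the paper instead rewrites $L$ in two equivalent forms before expanding, but the substance is the same.
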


\begin{proofof}{Lemma~\ref{lem:mix_lf}}
First assume $\Delta > 0$, so $u,v > 0$ as well.
By \eqref{eq:mix_uv}, we can write $L(x,y)$ as
$$
L(x,y) = x^u y^v ( \frac{b_0}{u+2 }\ x^2  + \frac{b_2}{u}\ y^2 )
= x^u y^v ( \frac{a_0}{v}\ x^2  + \frac{a_2}{v+2 }\ y^2 ).
$$
Using these two equivalent expressions and that $\frac{a_1}{v+1} = \frac{b_1}{u+1}$. by
\eqref{eq:condL}, we compute the Lie derivative directly
\begin{eqnarray*}
\dot L &=& \langle \nabla L , X \rangle \\ &=&
x^{u-1}y^v\left(\frac{b_0}{u+2}(u+2) x^2 + \frac{a_1}{v+1}xy + \frac{b_2}{u} u x^{u-1} y^2\right)
x(a_0 x^2 + a_1 xy + a_2 y^2 )\\
&& - x^uy^{v-1}\left(\frac{a_0}{v} x^2 v + \frac{a_1}{v+1}xy+\frac{a_2}{v+2}(v+2)y^2\right)
y(b_0 x^2 + b_1 xy + b_2 y^2)\\
&=& 0.
\end{eqnarray*}
Any function of a first integral is a first integral, in particular this holds for $1/L$.
Therefore the conclusion is immediate for $\Delta < 0$ too.
\end{proofof}

\begin{proofof}{Proposition~\ref{prop:mix_regvar}}
We carry out the proof for $\Delta > 0$, so 
$L(x,y) =  x^u y^v ( \frac{a_0}{v}\ x^2  + \frac{a_1}{v+1} xy + \frac{b_2}{u}\ y^2 )$ as in Lemma~\ref{lem:mix_lf}.
The case $\Delta < 0$ goes likewise.
Fix $\eta$ such that $(\xi(\eta,T), \eta) \in \overline{\phi^{-1}(Q) \setminus Q}$.
For simplicity of notation, we will suppress the $\eta$ and $T$ in $\xi(\eta,T)$.
We use the variable $M = y/x$, so $y = Mx$ and differentiating gives
$\dot y = \dot M x + M \dot x$.
Recalling that $c_i = a_i+b_i$ and inserting the values for $\dot x$ and $\dot y$ from \eqref{eq:horivf},
we get
\begin{equation}\label{eq:mix_M0}
\dot M = -M( c_0 + c_1 M + c_2 M^2 ) x^2 .
\end{equation}
Assume that we are in the level set 
$L(x,y) =  L(\xi,\eta) = \xi^u \eta^v(\frac{a_0}{v} \xi^2  +\frac{a_1}{v+1} \xi\eta + \frac{b_2}{u}\eta^2 )$, then
we can solve for $x^2 $ in the expression
\begin{eqnarray*}
\xi^u\eta^v(\frac{a_0}{v} \xi^2  + \frac{a_1}{v+1} \xi\eta + \frac{b_2}{u}\eta^2 ) &=& 
x^u y^v (\frac{a_0}{v} x^2  + \frac{a_1}{v+1} xy + \frac{b_2}{u} y^2 ) \\
&=& x^{u+v+2 } M^v (\frac{a_0}{v} + \frac{a_1}{v+1} M + \frac{b_2}{u} M^2 ).
\end{eqnarray*}
Here we used
\begin{eqnarray}\label{eq:mix_rewrite}
\xi^u\eta^v(\frac{a_0}{v} \xi^2  + \frac{a_1}{v+1} \xi\eta + \frac{b_2}{u}\eta^2 ) &=& 
\xi^u\eta^v(\frac{a_0 \Delta}{2a_0 c_2} \xi^2  + \frac{a_1 \Delta}{2a_0c_2+\Delta} \xi\eta 
+ \frac{b_2 \Delta}{2b_2c_0}\eta^2 ) \nonumber \\
&=& \frac{\Delta}{2c_0c_2} \left(c_0 \xi^2 + \frac{2a_1 c_0c_2}{2a_0c_2+\Delta} \xi\eta 
+ c_2\eta^2\right) \nonumber \\
&=& \frac{\Delta}{2c_0c_2} \left(c_0 \xi^2 + c_1 \xi\eta 
+ c_2\eta^2\right),
\end{eqnarray}
(where the last step follows from \eqref{eq:condL}) and a similar computation for the term with $x,y$.

Use \eqref{eq:mix_uv} and \eqref{eq:mix_gamma} to obtain
$$
\begin{cases}
\frac{a_0}{v} + \frac{a_1}{v+1} M + \frac{b_2}{u} M^2  = \frac{\Delta}{2 c_0c_2} ( c_0+c_1M+ c_2M^2 ),
\\[1mm]
\frac{a_0 \xi^2}{v}  + \frac{a_1}{v+1} \xi\eta + \frac{b_2 \eta^2 }{u} 
= \frac{\Delta}{2 c_0c_2} ( c_0 \xi^2 + c_1 \xi \eta + c_2 \eta^2 ).
\end{cases}
$$
This gives 
\begin{eqnarray}\label{eq:x2}
x^2  &=& \xi^{\frac{2 u}{u+v+2 }}  \eta^{\frac{2 v}{u+v+2 }} M^{-\frac{2 v}{u+v+2 }}
\left(\frac{c_0\xi^2 + c_1\xi\eta + c_2\eta^2 }{c_0+c_1M+c_2M^2 } \right)^{\frac{2 }{u+v+2 }} \nonumber \\
 &=& \xi^{\frac{1}{\beta_2}}  \eta^{\frac{1}{\beta_0}} M^{-\frac{1}{\beta_0}}
 \left(\frac{c_0\xi^2 + c_1\xi\eta + c_2\eta^2 }{c_0+c_1M+c_2M^2 } \right)^{1 - \frac{1}{2 \beta_0} - \frac{1}{2 \beta_2}}
\end{eqnarray}
where recall $\beta_0 = \frac{u+v+2 }{2 v}$ and $\beta_2 = \frac{u+v+2 }{2 u}$ 
from \eqref{eq:mix_gamma}, which also gives $1-\frac{2 }{u+v+2 } = \frac{1}{2 \beta_0} + \frac{1}{2 \beta_2}$.
Combined with \eqref{eq:mix_M0}, this gives
\begin{equation}\label{eq:mix_M3}
 \dot M = -G M^{1-\frac{1}{\beta_0}} 
 \left(c_0+c_1M+c_2M^2  \right)^{\frac{1}{2 \beta_0} + \frac{1}{2 \beta_2}}
\end{equation}
with
\begin{equation}\label{eq:mix_Exi}
G = G(\xi,\eta) := \xi^{ \frac{1}{\beta_2} } \eta^{ \frac{1}{\beta_0} }
\left( c_0\xi^2  + c_1\xi\eta + c_2\eta^2  \right)^{1-\frac{1}{2 \beta_0}-\frac{1}{2 \beta_2}}.
\end{equation}
For the exit time $T \geq 0$, recall that $\xi(\eta,T)$ and $\omega(\eta,T)$ are such that 
the solution of \eqref{eq:horivf} satisfies
$(x(0), y(0)) = (\xi(\eta,T), \eta)$ and $(x(T), y(T)) = (\zeta_0,\omega(\eta,T))$.
This implies $M(0) = \eta/\xi(\eta,T)$ and $M(T) = \omega(\eta,T)/\zeta_0$.
Inserting this in \eqref{eq:mix_M3}, separating variables, and integrating we get
\begin{equation}\label{eq:mix_M4}
\int_{\omega(\eta,T)/\zeta_0}^{\eta/\xi(\eta,T)} \frac{ M^{\frac{1}{\beta_0}-1}  \, dM}
{  \left(c_0+c_1M+c_2 M^2  \right)^{ \frac{1}{2 \beta_0} + \frac{1}{2 \beta_2} } }
= G(\xi(\eta,T), \eta) T.
\end{equation}
In the rest of the proof, we will frequently suppress the dependence on $\eta$ and $T$ in $\xi(\eta,T)$
and $\omega(\eta,T)$. We know that 
$L(\xi(\eta, T), \eta) = \xi^u \eta^v(\frac{a_0}{v} \xi^2  + \frac{b_2}{u}\eta^2 ) = 
\zeta_0^u \omega^v (\frac{a_0}{v} \zeta_0^2 + \frac{b_2}{u} \omega^2 )
= L(\eta,\omega(\eta,T))$,
which gives
\begin{equation}\label{eq:mix_xieta}
\xi^u \eta^v (c_0\xi^2 +c_1\xi \eta+ c_2\eta^2 ) = \zeta_0^u \omega^v(c_0\zeta_0^2+c_1\zeta_0\omega+c_2\omega^2 ).
\end{equation}
From their definition, $\xi(\eta,T)$ and $\omega(\eta, T)$ are clearly decreasing in $T$,
so their $T$-derivatives $\xi'(\eta,T), \omega'(\eta,T) \leq 0$. 
Since $c_0, c_2 > 0$ (otherwise $\Delta = 0$), the integrand of \eqref{eq:mix_M4} is $O(M^{\frac{1}{\beta_0}-1})$ 
as $M \to 0$ and $O(M^{-\frac{1}{\beta_2}-1})$ as $M \to \infty$. Hence the integral is increasing and bounded in $T$.
But this means that $G(\xi(\eta,T), \eta) T$ is increasing in $T$ and bounded as well.
Let $g(\eta,T) = \xi(\eta,T) T^{\beta_2}$. Since
$$
G(\xi(\eta,T), \eta) T  = g(\eta,T)^{\frac{1}{\beta_2}} \eta^{\frac{1}{\beta_0}} (c_0 g(\eta,T)^2  T^{-2  \beta_2}
+ c_1 g(\eta,T) T^{-\beta_2} + c_2 \eta^2 )^{1-\frac{1}{2 \beta_0} - \frac{1}{2 \beta_2}},
$$
and $1-\frac{1}{2 \beta_0} - \frac{1}{2 \beta_2} > 0$,
we find that $g(\eta,T)$ 
converges\footnote{For the symmetric statement on $\omega(\eta,T)$, define $\hat g(\eta,T) = \omega(\eta, T) T^{\beta_0}$.
Then $\lim_{T \to \infty} \hat g(\eta,T) = \lim_{T \to \infty} g(\eta,T)^{ \beta_0/\beta_2 } \eta^{1+2 /v} 
\zeta_0^{-b_0/a_0} (\frac{c_2}{c_0})^{1/v}$.}:
\begin{equation}\label{eq:mix_xi0}
\xi_0(\eta) := \lim_{T\to\infty} g(\eta,T) = c_2^{-\frac1u} \eta^{- \frac{a_2}{b_2} } \left( \int_0^\infty 
\frac{ M^{\frac{1}{\beta_0}-1} \, dM }
{\left( c_0 + c_1 M + c_2 M^2  \right)^{ \frac{1}{2 \beta_0} + \frac{1}{2 \beta_2} } }\right)^{\beta_2} ,
\end{equation}
where we have used $-\beta_2(1-\frac{1}{2 \beta_0} - \frac{1}{2 \beta_2}) = -\frac{2 \beta_2}{u+v+2 } = -\frac{1}{u}$
for the exponent of $c_2$, and $\frac{2 }{u} + \frac{\beta_2}{\beta_0} = \frac{v+2 }{u} = \frac{a_2}{b_2}$ 
for the exponent of $\eta$.

We continue the proof to get higher asymptotics.
Differentiating \eqref{eq:mix_M4} w.r.t.\ $T$ gives
\begin{equation}\label{eq:mix_Mprime}
-\frac{ \eta^{\frac{1}{\beta_0}} \xi^{\frac{1}{\beta_2}-1} \xi'}{(c_0 \xi^2  + c_1\xi\eta + c_2\eta^2 )^{\frac{1}{2 \beta_0} + \frac{1}{2 \beta_2}}}
-\frac{\zeta_0^{\frac{1}{\beta_2}} \omega^{\frac{1}{\beta_0}-1} \omega'}{(c_0\zeta_0^2 + c_1 \zeta_0\omega + c_2 \omega^2 )^{\frac{1}{2 \beta_0} + \frac{1}{2 \beta_2}}}
=  \frac{\partial G(\xi,\eta)}{\partial\xi} T \xi'  + G(\xi,\eta),
\end{equation}
where (by differentiating \eqref{eq:mix_Exi})
$$
\frac{\partial G(\xi,\eta)}{\partial\xi}  = 
(2b_0\xi^2 + c_1(\frac{b_0}{c_0}+\frac{a_2}{c_2})\xi\eta + b_2\eta^2 ) \xi^{\frac{1}{\beta_2}-1} \eta^{\frac{1}{\beta_0}}
 (c_0\xi^2  + c_1 \xi\eta + c_2 \eta^2 )^{-\frac{1}{2 \beta_0} - \frac{1}{2 \beta_2}}.
$$
Combined with \eqref{eq:mix_Exi}, \eqref{eq:mix_xieta} and \eqref{eq:mix_Mprime}, this gives
\begin{align}\label{eq:mix_xi2}
-\eta^{\frac{1}{\beta_0}} \xi' &- \zeta_0^{\frac{1}{\beta_2}}
\left(\frac{\zeta_0^u \omega^v}{\xi^u \eta^v} \right)^{\frac{1}{2 \beta_0} + \frac{1}{2 \beta_2}} 
\frac{\omega^{\frac{1}{\beta_0}-1}}{\xi^{\frac{1}{\beta_2}-1}}  \omega' \nonumber \\
&=(2b_0\xi^2 + c_1(\frac{b_0}{c_0}+\frac{a_2}{c_2})\xi\eta + b_2\eta^2 )  T \eta^{\frac{1}{\beta_0}} \xi' 
+ \eta^{\frac{1}{\beta_0}} \xi (c_0 \xi^2 + c_1 \xi\eta + c_2 \eta^2 ).
\end{align}
Because $\frac{1}{2 \beta_0} + \frac{1}{2 \beta_2} - 1 = -\frac{2 }{u+v+2 }$, using
\eqref{eq:mix_gamma} and dividing by $\eta^{\frac{1}{\beta_0}}$, we can simplify \eqref{eq:mix_xi2} to 
\begin{equation}\label{eq:mix_xi2a}
- \xi' - \frac{\zeta_0^u}{\eta^v} \frac{\omega^{v-1}}{\xi^{u-1}} \omega' =
(2b_0\xi^2 + c_1(\frac{b_0}{c_0}+\frac{a_2}{c_2})\xi\eta + b_2\eta^2 ) 
T \xi' + \xi (c_0 \xi^2 + c_1 \xi\eta + c_2 \eta^2 ).
\end{equation}
Taking the derivative of \eqref{eq:mix_xieta} w.r.t.\ $T$  and multiplying with $\Delta/(c_0c_2)$ gives
$$
(2b_0\xi^2 + c_1(\frac{b_0}{c_0}+\frac{a_2}{c_2})\xi\eta + b_2\eta^2 )  \eta^v \xi^{u-1} \xi' = 
(2a_0\zeta_0^2 + + c_1(\frac{b_0}{c_0}+\frac{a_2}{c_2})\zeta_0\omega+ 2a_2 \omega^2 )\zeta_0^u \omega^{v-1} \omega'.
$$
Hence, we can rewrite \eqref{eq:mix_xi2a} as
\begin{align*}
-\Big(1 +  &\frac{2b_0\xi^2 + c_1(\frac{b_0}{c_0}+\frac{a_2}{c_2})\xi\eta + 2b_2\eta^2}
{2a_0\zeta_0^2 + c_1(\frac{b_0}{c_0}+\frac{a_2}{c_2})\zeta_0\omega + 2a_2\omega^2 }\Big) \xi' \\
& =\
(2b_0\xi^2 + c_1(\frac{b_0}{c_0}+\frac{a_2}{c_2})\xi\eta + b_2\eta^2 )  T  \xi' +  \xi (c_0 \xi^2 +
c_1 \xi\eta + c_2 \eta^2 ).
\end{align*}
We insert $\xi' = g'(T) T^{-\beta_2} - \beta_2 g(T) T^{-(1+\beta_2)}$ and multiply with $T^{\beta_2}$,
which leads to
\begin{align*}
-\Big(1+ &\frac{2b_0\xi^2 + c_1(\frac{b_0}{c_0}+\frac{a_2}{c_2})\xi\eta + 2b_2\eta^2}
{2a_0\zeta_0^2 + c_1(\frac{b_0}{c_0}+\frac{a_2}{c_2})\zeta_0\omega + 2a_2\omega^2 }\Big) (g'(T) - \beta_2 g(T) T^{-1})
\\
&=\ (2b_0\xi^2  + c_1 (\frac{b_0}{c_0}+\frac{a_2}{c_2})\xi\eta + 2b_2\eta^2) g'(T) \ T 
-\frac{\Delta}{b_2} g(T)^3 T^{-2 \beta_2}.
\end{align*}
Since $\xi = O(T^{-\beta_2})$ and $\omega = O(T^{-\beta_0})$, we can write this differential equation as
$$
\frac{g'}{g} = \frac1{T^2} \frac{\beta_2}2 
\frac{\frac{a_0\zeta_0^2 +b_2 \eta^2 + O(T^{-2 \beta_2})}{a_0\zeta_0^2 +O(T^{-2 \beta_0})}
-\frac{\Delta}{b_2} g(T)^2 T^{-\frac{a_2}{b_2}} }
{b_2 \eta^2 + O(T^{-2 \beta_2}) + O(T^{-1})}.
$$
Keeping the leading
terms only (where we use that $2\beta_2, 2\beta_0 > 1$), we get the differential equation
$$
\frac{g'}{g} = (\xi_1(\eta) + O(\max\{T^{-1}, T^{-\frac{a_2}{b_2}}\} ) ) \frac{1}{T^2} \quad \text{ for }
\xi_1 = \xi_1(\eta) := \frac{\beta_2}2 \left( \frac{1}{a_0\zeta_0^2}+ \frac{1}{b_2\eta^2 } \right).
$$
Using the limit boundary value $\xi_0 = \xi_0(\eta) = \lim_{T \to \infty} g(\eta,T)$, we find the solution
$$
g(\eta,T) = \xi_0 e^{-(\xi_1 + O(\max\{T^{-1}, T^{-\frac{a_2}{b_2}}\} ) )T^{-1} } 
= \xi_0( 1 - \xi_1 T^{-1}  + O(\max\{T^{-2}, T^{-2 \beta_2}\} ) )
$$
as required. The analogous asymptotics for $\omega$ and the constants $\omega_0$ and $\omega_1$
can be derived by changing the time direction and the roles 
$(a_0, a_2) \leftrightarrow (b_2, b_0)$, and also by the relation $\xi^u \eta^{v+2 }c_2 \sim \zeta_0^{u+2} \omega^v c_0$
from \eqref{eq:mix_xieta}:
$$
\omega_0(\eta) := c_0^{-\frac{1}{v}} \zeta_0^{-\frac{b_0}{a_0}} \left( \int_0^\infty \frac{ M^{\frac{1}{\beta_2}-1}  \, dM }
{\left( c_0 M^2  + c_1 M + c_2 \right)^{ \frac{1}{2 \beta_0} + \frac{1}{2 \beta_2} } }
\right)^{\beta_0}
\ \text{ and } \ \omega_1(\eta) := \frac{\beta_0}{2 } \left(\frac{1}{b_2\zeta_0^2}+ \frac{1}{a_0\eta^2 }\right).
$$
This concludes the proof.
\end{proofof}

\section{Proof of Theorem~\ref{thm:perturb}}\label{sec:perturb}

To prove that the regular variation established in Proposition~\ref{prop:mix_regvar}
is robust under perturbations of the vector field, we put the $\cO(4)$ terms back into \eqref{eq:horivf},
but since we consider it as a perturbation of \eqref{eq:horivf2},, we write $\tilde X$ instead:

\begin{equation}\label{eq:perturb}
\tilde X = \begin{pmatrix} \tilde X_1 \\ \tilde X_2 \end{pmatrix}
= \begin{pmatrix}
   x(a_0 x^2 + a_1xy + a_2 y^2) \\
  -y(b_0 x^2 + b_1xy + b_2 y^2)
  \end{pmatrix} + \cO(4),
\end{equation}
so that $|\tilde X - X| = \cO(4)$.
The quantities $\xi(\eta, T), \omega(\eta,T)$ will be written as $\tilde \xi(\eta, T), \tilde\omega(\eta,T)$ etc., 
and the goal is to show that $\tilde \xi(\eta, T)$ is still regularly varying.
Let us now give the proof of Theorem~\ref{thm:perturb}.

\begin{proof}
As before, let $\xi = \xi( \eta,T)$ be such that for the unperturbed flow, 
$\phi^T (\xi, \eta) = (\zeta_0, \omega(\eta,T))$.
Proposition~\ref{prop:mix_regvar} gives the asymptotics of $\xi(\eta,T)$ as $T \to \infty$.
At the same time, under the perturbed flow associated to \eqref{eq:perturb},
$\phi^{\tilde T} (\xi, \eta) = (\zeta_0, \tilde \omega(\eta,\tilde T))$ for some $\tilde T$.
Therefore we can write $\xi(\eta,T)  = \tilde \xi(\eta, \tilde T)$,
and once we estimated $\tilde T$ as function of $T$, we can express $\tilde \xi(\eta, \tilde T)$
explicitly as function of $\tilde T$. We follow the argument of the proof of Proposition~\ref{prop:mix_regvar}, 
keeping track of the effect of the higher order terms. \\[3mm]
{\bf The perturbed first integral:} 
To start, we construct a first integral 
 $\tilde L$ on $Q = [0,\zeta_0] \times [0,\eta_0]$ by defining 
 $$
 \tilde L(\tilde \phi^t(\delta, \delta)) = L(\delta,\delta) = 
 \begin{cases}
  \delta^{u+v+2}(\frac{a_0}{v} + \frac{a_1}{v+1} + \frac{b_2}{u})  & \text{ if } \Delta > 0,\\
  \delta^{-(u+v+2)}(\frac{a_0}{v} + \frac{a_1}{v+1} + \frac{b_2}{u})^{-1}  & \text{ if } \Delta < 0,
 \end{cases}
 $$ 
 for $0 < \delta \leq \min\{\zeta_0,  \eta_0\}$ and $t \in \R$.
 (We continue the argument for the case $\Delta > 0$; the other case goes analogously.)
 
 By construction, $\tilde L$ is constant on integral curves
 of $\dot z = \tilde X(z)$.
 Because $\tilde X$ is $C^3$, the integral curves are $C^3$ curves,
 and form a $C^3$ foliation of $P_0$, see e.g.\ \cite[Theorem 2.10]{Teschl}.
 Note that the coordinate axes consist
 of the stationary point $(0,0)$ and its stable and unstable manifold;
 we put $\tilde L(x,0) = \tilde L(0,y) = 0$.
 Then $\tilde L$ is continuous on $Q$ and $C^{2+1}$ on the interior of $Q$.
 
 Now we compare $\tilde L$ with $L$ on a small neighbourhood $U$ of 
 $\phi_{hor}^{-1}(Q) \cup Q \cup \phi_{hor}^1(Q)$.
 Take $y_0 = \eta_0$ and $x_0 = x_0(\delta)$ such that the integral curve
 of $\dot z = X(z)$ through $z_0 := (x_0, y_0)$  intersects the diagonal at $(\delta, \delta)$.
 Then the integral curve
 of $\dot z = \tilde X(z)$ through $z_0$ intersects the diagonal at $(\tilde \delta, \tilde \delta)$
 for some $\tilde \delta = \tilde \delta(\delta)$, see Figure~\ref{fig:deltas}.
\begin{figure}[ht]
\begin{center}
\begin{tikzpicture}[scale=1.3]
\node at (3.6,-0.15) {\small $x$};\node at (-0.16,3.6) {\small $y$};
\draw[->] (-0.5,0)--(3.6,0);
\draw[->] (0,-0.5)--(0,3.6);
\draw[-] (0,0)--(2,2);
\node at (0.16, 2.8) {\small $\bullet$}; \node at (0.34,2.8) {\small $z$};
\node at (0.525, 0.525) {\small $\bullet$}; \node at (-0.1,0.48) {\small $\delta$};
\node at (0.68, 0.68) {\small $\bullet$}; \node at (-0.1,0.74) {\small $\tilde\delta$};
\draw[-] (1,-0.05)--(1,0.05); \node at (1,-0.25) {\small $\tilde x(\delta)$};
\draw[->, draw=blue] (0.16,2.8) .. controls (0.2,0.3) and (0.5,0.5) .. (1.3,0.25);
\draw[->, draw=red] (0.16,2.8) .. controls (0.25,0.3) and (0.7,0.7) .. (1.35,0.4);
\node at (9.6,-0.15) {\small $x$};\node at (5.84,3.6) {\small $y$};
\draw[->] (5.5,0)--(9.6,0);
\draw[->] (6,-0.5)--(6,3.6);
\draw[-] (6,0)--(8,2);
\node at (6.16, 2.8) {\small $\bullet$}; \node at (6.35,2.8) {\small $z$};
\node at (6.54, 0.54) {\small $\bullet$}; \node at (5.9,0.63) {\small $\delta$};
\node at (6.43, 0.43) {\small $\bullet$}; \node at (5.9,0.4) {\small $\tilde\delta$};
\draw[-] (6.3,-0.05)--(6.3,0.05); \node at (6.34,-0.25) {\small $\tilde x(\delta)$};
\draw[->, draw=blue] (6.16,2.8) .. controls (6.2,0.3) and (6.5,0.5) .. (7.3,0.25);
\draw[->, draw=red] (6.16,2.8) .. controls (6.15,0.3) and (6.3,0.3) .. (7.35,0.1);
\end{tikzpicture}
\caption{Solutions of \eqref{eq:vfq1} and \eqref{eq:vfq2}, starting from the same point
$z = (x, y)$. The left and right panel refer to the cases $\tilde\delta > \delta$
and $\tilde \delta < \delta$ respectively.}
\label{fig:deltas}
\end{center}
\end{figure}
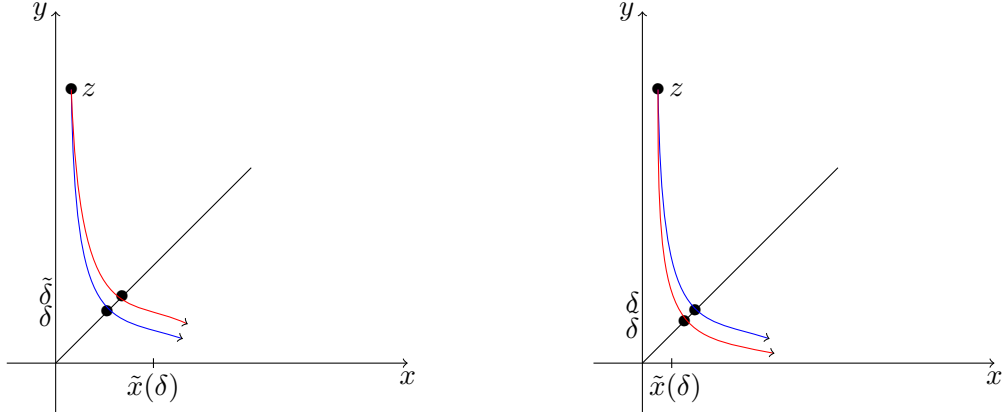 

Therefore
\begin{equation}\label{eq:LL}
\tilde L(z) \ = \ \tilde L(\tilde \delta, \tilde\delta)\ =\  L(\delta,\delta)
\left(\frac{\tilde\delta}{\delta}\right)^{u+v+2}
\ =  \ L(z) \left( \frac{\tilde\delta}{\delta}\right)^{u+v+2} \!\!\!\!\!\!\! \!\!\!\!\!\!\!\!\!\!.
\end{equation}
\noindent
{\bf Estimating $\tilde \delta/\delta$:} Parametrise the integral curve of $X$ through $z_0$ as
$(x(y), y)$ for $\min\{ \delta, \tilde \delta\} \leq y \leq y_0$.
(So $x \leq y$; the case $y \leq x$ can be dealt with by switching the roles of $x$ and $y$.)
Then by \eqref{eq:horivf}:
\begin{equation}\label{eq:vfq1}
 x'(y) = -\frac{x(a_0 x^2 + a_1xy+ a_2 y^2)}{y(b_0 x^2 + b_1 xy + b_2 y^2)}.
\end{equation}
For the perturbed vector field \eqref{eq:perturb} we parametrise the integral curve of through $z_0$ as
$(\tilde x(y), y)$ and we have the analogue of \eqref{eq:vfq1}:
\begin{equation}\label{eq:vfq2}
 \tilde x'(y) = -\frac{\tilde x(a_0 \tilde x^2 + a_1 \tilde x y + a_2 y^2 
 + \sum_{j=0}^3 \hat a_j \tilde x^j y^{3-j} + o(|(\tilde x,y)|^3))}
 {y(b_0 \tilde x^2 + b_1 \tilde x y+  b_2 y^2 + \sum_{j=0}^3 \hat b_j \tilde x^j y^{3-j}
 + o(|(\tilde x,y)|^3))}.
\end{equation}
Since $x \leq y$, the $O$-terms can be written as $O(y^3)$.
Combining \eqref{eq:vfq1} and \eqref{eq:vfq2} we obtain
\begin{eqnarray*}
\tilde x'(y)  &=&  -\frac{\tilde x(a_0 \tilde x^2 + a_1 \tilde x y + a_2 y^2)}
{y(b_0\tilde x^2 + b_1 \tilde x y + b_2 y^2)}
(1+ q_0 \tilde x + q_2 y +  o(|(\tilde x,y)|)) \\
&=& x'(y)(1+q_0 x + q_2 y +  o(|(\tilde x,y)|)).
\end{eqnarray*}
We will neglect the term $o(|(\tilde x,y)|)$ because they can be absorbed
in the big-$O$ terms at the end of the estimate.
Integration over $[\delta,y_0]$ gives
$$
\tilde x(y_0) - \tilde x(\delta) = x(y_0) - x(\delta)  + q_0 \int_\delta^{y_0} x'(y) x(y)\ dy
+ q_2 \int_\delta^{y_0} x'(y) y\ dy.
$$
Since $\tilde x(y_0) = x(y_0) = x_0$ and $x(\delta) = \delta$, this simplifies to
\begin{eqnarray}\label{eq:dtilded}
\tilde x(\delta) - \delta &=&  -  \frac{q_0}2 \int_\delta^{y_0} (x^2(y))' \ dy -
q_2 \int_\delta^{y_0} (x(y) y)' \ dy + q_2 \int_\delta^{y_0} x(y) \ dy  \nonumber \\
&=& \frac{q_0}2(\delta^2 - x_0^2) + q_2 \left( \delta^2 - x_0 y_0 + \int_\delta^{y_0} x(y) \ dy \right).
\end{eqnarray}
We solve for $x$ from 
$x^u y^v (\frac{a_0}{v} x^2 + \frac{a_1}{v+1} xy + \frac{b_2}{u}y^2) = L(x, y) = L(\delta, \delta) 
= \delta^{u+v+2}(\frac{a_0}{v}+\frac{a_1}{v+1}+\frac{b_2}{u})$:
\begin{eqnarray}\label{eq:x(y)}
 x = x(y) &=& \delta^{\frac{u+v+2}{u}} y^{-\frac{v+2}{u}} (1+\frac{a_1 u}{b_1(v+1)} + \frac{u a_0}{v b_2})^{\frac1u}
 (1+\frac{a_1 u}{b_1 (v+1)} + \frac{c_0}{c_2} \frac{x^2}{y^2})^{-\frac1u} \nonumber \\
  &=& \underbrace{(c_2+\frac{a_1 u c_2}{b_1 (v+1)}+c_0)^{\frac1u} 
  (c_2+\frac{a_1 u C_2}{b_1 (v+1)} + c_0 \frac{x^2}{y^2})^{-\frac1u}}_{U(y)}   \delta^{1+\frac{a_2}{b_2}} y^{-\frac{a_2}{b_2}}.
\end{eqnarray}
In particular,
$$
x_0 = x(y_0) = \delta^{1+\frac{a_2}{b_2}}(1++\frac{a_1 u}{b_1 (v+1)}+\frac{c_0}{c_2})^{\frac1u} y_0^{-\frac{a_2}{b_2}} 
  (1+O(\delta^{2(1+\frac{a_2}{b_2})})). 
$$
Combine the first two factors of \eqref{eq:x(y)} to
$$
U(y) :=  (c_2+\frac{a_1 u c_2}{b_1 (v+1)}+c_0)^{\frac{1}{u}} (c_2+\frac{a_1 uc_2}{b_1 (v+1)} + c_0 \frac{x^2}{y^2})^{-\frac1u} 
\in [1, (1+\frac{c_0}{c_2})^{\frac{1}{u}}].
$$
Note that $\lim_{y \to \delta} U(y) = 1$, and $U(y)$ is differentiable.
Using \eqref{eq:vfq1} and \eqref{eq:x(y)} we compute the derivative
$$
U'(y) = \frac{\Delta c_0}{b_2}\ \frac{U(y)}{b_2+b_0 (\frac{x}{y})^2}\ \frac1y\ (\frac{x}{y})^2
= \frac{\Delta c_0}{b_2}\ \delta^{2(1+\frac{a_2}{b_2})}\  \frac{U(y)^3}{b_2+b_0 (\frac{x}{y})^2} 
\ y^{-2(1+\frac{a_2}{b_2})-1}.
$$
Next we integrate by parts (assuming first that $\frac{a_2}{b_2} \neq 1$):
\begin{eqnarray*}
\int_\delta^{y_0} x(y) \ dy &=& \delta^{1+\frac{a_2}{b_2}} \int_\delta^{y_0} U(y) y^{-\frac{a_2}{b_2}} dy
= \frac{b_2}{b_2-a_2}
\left( U(y_0) \delta^{1+\frac{a_2}{b_2}} y_0^{1-\frac{a_2}{b_2}} - \delta^2 \right) \\
&&  \qquad \underbrace{- \frac{\Delta c_0}{b_2-a_2}\ \delta^{3(1+\frac{a_2}{b_2})}\  
\int_\delta^{y_0} \frac{U(y)^3}{b_2+b_0 (\frac{x}{y})^2} \  
y^{1-3(1+\frac{a_2}{b_2})} dy}_I.
\end{eqnarray*}
Since $\frac{1}{b_2+b_0} \leq  \frac{U(y)^3}{b_2+b_0 (\frac{x}{y})^2}
\leq \frac{1}{b_2}(1+\frac{c_0}{c_2})^{\frac{3}{u}}$
and $\frac{U(y)^3}{b_2+b_0 (\frac{x}{y})^2} \to \frac{1}{b_2+b_0}$ as $y \to \delta$,
there are constants $\hat C_1, \hat C_2 \in \R$ 
such that the final term in the above expression is
$$
I = \hat C_1 \delta^2 + \hat C_2 \delta^{3(1+\frac{a_2}{b_2})}
y_0^{2-3(1+\frac{a_2}{b_2})} + O(\delta^3).
$$
For the case $\frac{a_2}{b_2} = 1$, a similar computation gives
$$
\int_\delta^{y_0} x(y) \ dy = \hat C_3 \delta^2 \log \delta + \hat C_4 \delta^2 \log y_0
+ \hat C_5 \delta^6 y_0^{-4} \log y_0
+  \hat C_6 \delta^6 y_0^{-4} + O(\delta^3 \log \delta),
$$
for some generically nonzero $\hat C_3, \hat C_4, \hat C_5, \hat C_6 \in \R$.

By \eqref{eq:vfq2}, the derivative $\tilde x'(\delta) = \frac{a_0+a_1+a_2}{b_0+b_1+b_2} + O(\delta)$.
Since $\tilde \delta$ lies between $\delta$ and $\tilde x(\delta)$ (see Figure~\ref{fig:deltas}), 
we have 
\begin{eqnarray}\label{eq:xdelta}
|\tilde x(\delta) - \delta| &=& |\tilde x(\delta) - \tilde \delta| + |\tilde \delta - \delta| \nonumber \\
&=& \left(1+ \frac{a_0+a_1+a_2}{b_0+b_1+b_2} + O(\delta)\right) |\tilde \delta - \delta|
= \frac{c_0+c_1+c_2 + O(\delta) }{b_0+b_1+b_2}|\tilde \delta - \delta|.
\end{eqnarray}
Later in the proof we need the quantity
$$
\psi(\delta) := \left(\frac{\tilde\delta}{\delta}\right)^{u+v+2} - 1 = 
(u+v+2) \frac{\tilde \delta - \delta}{\delta} + O\left(\frac{|\tilde \delta - \delta|^2}{\delta^2}\right).
$$
Writing $|\tilde \delta-\delta|$ in terms of $|\tilde x(\delta)-\delta|$ using
\eqref{eq:xdelta}, and
combining with the above estimates for $|\tilde x(\delta)-\delta|$, we find 
\begin{eqnarray}\label{eq:psidelta}
\psi(\delta) &=& C_1 \delta + C_2 \delta^{\frac{a_2}{b_2}} y_0^{1-\frac{a_2}{b_2}} 
 + C_3 \delta^{1+\frac{a_2}{b_2}} y_0^{-\frac{a_2}{b_2}} 
 + C_4 \delta^{1+\frac{2a_2}{b_2}} y_0^{-\frac{2a_2}{b_2}} \nonumber  \\
&& +\ C_{\log} \delta \log \delta + O(\delta^2, \delta^{\frac{2a_2}{b_2}} )
\end{eqnarray}
for (generically nonzero) constants $C_1, C_2, C_3, C_4 \in \R$
and $ C_{\log}$ is only nonzero if $\frac{a_2}{b_2} = 1$.

For the region $\{ x \geq y \}$ (containing the point $(x_1, y_1) := (\zeta_0, \tilde \omega(\eta,\tilde T))$)
we reverse the roles  $a_2,b_2,x_0, y_0 \leftrightarrow b_0, a_0, y_1, x_1$.
This gives
\begin{eqnarray*}
\psi(\delta) &=& \hat C_1 \delta + \hat C_2 \delta^{\frac{b_0}{a_0}} x_1^{1-\frac{b_0}{a_0}} 
 + \hat C_3 \delta^{1+\frac{b_0}{a_0}} x_1^{-\frac{b_0}{a_0}} 
 + \hat C_4 \delta^{1+\frac{2b_0}{a_0}} x_1^{-\frac{2b_0}{a_0}} \nonumber  \\
&& +\ \hat C_{\log} \delta \log \delta + O(\delta^2, \delta^{\frac{2b_0}{a_0}} ),
\end{eqnarray*}
for (generically nonzero) constants $\hat C_1, \hat C_2, \hat C_3, \hat C_4 \in \R$
and $\hat C_{\log}$ is only nonzero if $\frac{b_0}{a_0} = 1$.
Combining with \eqref{eq:psidelta} gives
\begin{equation}\label{eq:psidelta2}
 \psi(\delta) =
\begin{cases}
O(\delta \log 1/\delta) & \text{ if }  \min\{\frac{a_2}{b_2}, \frac{b_0}{a_0}\} = 1,\\
 O(\delta^{a_*}) & \text{ otherwise, with } a_* = \min\{ 1, \frac{a_2}{b_2}, \frac{b_0}{a_0}\}.
\end{cases}
\end{equation}

\noindent
{\bf Estimate of $\tilde T$:} Now let $z_0 = (x_0, y_0) = (\xi(\eta, T), \eta) = (\tilde\xi(\eta, \tilde T), \eta)$ 
be the point such that $\phi_{hor}^T(z_0) = (\zeta_0, \omega(\eta,T))$ under the {\bf unperturbed}
flow and $\tilde \phi_{hor}^{\tilde T}(z_0) = (\zeta_0, \tilde \omega(\eta,\tilde T))$ under the {\bf perturbed}
flow. We estimate $\tilde T$ in terms of $T$.

Combining the estimate for $\xi(\eta, T)$ from Proposition~\ref{prop:mix_regvar}
with $L(\delta,\delta) = L(\xi(\eta,T), \eta)$, we can find the relation
between $\delta$ and $T$:
\begin{equation}\label{eq:deltaT}
 \delta = \delta_0 T^{-\frac{1}2} (1 +\frac{\xi_1}{2\beta_2} T^{-1}
 + O(T^{-2}, T^{-2 \beta_2})),
\end{equation}
for $\delta_0 = \xi_0^{\frac{1}{2 \beta_2}} \eta^{1-\frac{1}{2 \beta_2}}
(\frac{c_2}{c_0+c_2})^{\frac{1}{u+v+2}}$.

For $M = y/x$, computations analogous to \eqref{eq:mix_M0} show that there is
$\Psi = \Psi(x,M) = O(1+M^3)$ such that
$$
\dot M = - M (c_0 + c_1M + c_2 M^2 + x \Psi) x^2.
$$
For every $(x,y) = (x,xM)$ on the $\tilde \phi$-trajectory of $z_0$ (i.e., level set of $\tilde L$),
we have
$$
\xi^u \eta^v (\frac{a_0}{v} \xi^2 + \frac{a_1}{v+1} \xi \eta + \frac{b_2}{u} \eta^2)(1+\psi(\xi,\eta)) = 
x^{u+v+2} M^2 (\frac{a_0}{v} + \frac{a_1}{v+1} M + \frac{b_2}{u} M^2) (1+\psi(x,xM)).
$$
This gives the analogue of \cite[formula (32)]{BT17}
\begin{equation}\label{eq:M3a}
 \dot M = - G(\xi,\eta) M^{1-\frac{1}{\beta_0}}
 \left(c_0+c_1M + c_2M^2 + x\Psi(x,M) \right)^{\frac{1}{2 \beta_0} + \frac{1}{2 \beta_2}}
\left( \frac{ 1+\psi(x,y) }{1+\psi(\xi,\eta) } \right)^{1-\frac{1}{2 \beta_0} - \frac{1}{2 \beta_2}},
\end{equation}
where $G(\xi,\eta)$ is as in \eqref{eq:mix_Exi}. 
To estimate $\tilde T$, we take some increasing function 
$\delta \leq \rho(\delta) \leq \delta^{1/2}$ such that $\delta = o(\rho(\delta))$ and divide the 
trajectory $\tilde \phi^t(z_0) = (\tilde x(t), \tilde y(t))$  of $z_0$ into three parts
separated by two points in time:
\begin{equation}\label{eq:T1T2}
\tilde T_1 = \min\{ t > 0 : \tilde y(t) = \rho(\delta) \},
\qquad
\tilde T_2 = \max\{ t < \tilde T :  \tilde x(t) = \rho(\delta) \},
\end{equation}
and let $T_1, T_2$ be the analogous quantities for the unperturbed trajectory.
We compute
$$
T_1 = \int_{y_0}^{\rho(\delta)} \frac{dy}{\dot y} = 
\int_{y_0}^{\rho(\delta)} \frac{dy}{-y(b_0 x(y)^2 + b_1 \tilde x(y)y + b_2 y^2)} 
= O(\rho(\delta)^{-2}).
$$
Similarly, using $\tilde x(y)/x(y) = 1+O(\psi(\delta))$ as in \eqref{eq:x(y)},
\begin{eqnarray*}
\tilde T_1 - T_1 &=& 
\int_{y_0}^{\rho(\delta)}\frac{1+O(y)}{-y(b_0 \tilde x(y)^2 + b_1\tilde x(y)y + b_2 y^2)} 
-  \frac{1}{-y(b_0 x(y)^2 + b_2 y^2)} \ dy \\
&=& \int_{y_0}^{\rho(\delta)}\frac{O(y)( 1+O(\psi(\delta)))}{-y(b_0 x(y)^2 + b_1 \tilde x(y)y + b_2 y^2)} \ dy 
= O(\rho(\delta)^{1-2}).
\end{eqnarray*}
This gives
\begin{equation}\label{eq:tildeT1}
\tilde T_1 = T_1(1+ O(T_1^{-\frac12})) \quad \text{ and } \quad
\tilde T - \tilde T_2 = (T-T_2) (1+O(\rho( (T-T_2)^{-\frac12}) ))
\end{equation}
by a similar computation for $T-T_2 = \int_{\rho(\delta)}^{\zeta_0}\frac{dx}{\dot x}$, etc.

Finally, for $\tilde T_1 < t < \tilde T_2$, we have $\psi(x, y) = O(\delta^{\alpha_*}, \delta \log(1/\delta))$ 
by \eqref{eq:psidelta2}, and $x\Psi(x,M) = O(x + y M^2) = (1+M^2) O(\rho(\delta))$.
Therefore 
\begin{eqnarray*}
\tilde T_2 - \tilde T_1 &=& \int_{M(\tilde T_2)}^{M(\tilde T_1)} 
\frac{  M^{\frac{1}{\beta_0}-1}  \left(1+\frac{ O(x+yM^2) }{c_0+c_1M+c_2 M^2} \right)^{\frac{1}{2 \beta_0} + \frac{1}{2 \beta_2}} }
{ G(\xi,\eta)(c_0+c_1M+c_2 M^2)^{\frac{1}{2 \beta_0} + \frac{1}{2 \beta_2}} }
\left( \frac{ 1+\psi(x,y) }{1+\psi(\xi,\eta) } \right)^{\frac{1}{2 \beta_0} + \frac{1}{2 \beta_2}-1}\ dM\\[1mm]
&=& \int_{M(\tilde T_2)}^{M(\tilde T_1)} 
\frac{  M^{\frac{1}{\beta_0}-1} \left( 1+ O(\rho(\delta))  \right) }
{ G(\xi,\eta)(c_0+c_1M+c_2 M^2)^{\frac{1}{2 \beta_0} + \frac{1}{2 \beta_2}} }
\left(1+O(\delta^{\alpha_*}, \delta \log(1/\delta))\right)  \ dM\\[2mm]
&=& (T_2-T_1)(1+O(\rho(\delta), \delta^{\alpha_*}, \delta \log(1/\delta)).
\end{eqnarray*}
Choosing $\rho(\delta) = \delta \log(1/\delta)$,
and using \eqref{eq:deltaT} gives
$$
\tilde T_2 - \tilde T_1 = 
(T_2-T_1)(1+O(T^{-\frac{\alpha_*}2} , T^{-\frac{1}2} \log T) ).
$$
Combining this with \eqref{eq:tildeT1} 
gives
$\tilde T = T(1+O(T^{-\beta_*}, T^{-\frac{1}2}  \log T))$ for 
$\beta_* =  \frac{1}2 \min\{1, \frac{a_2}{b_2}, \frac{b_0}{a_0} \}$.
The estimate of Proposition~\ref{prop:mix_regvar} now gives 
$\tilde \xi(\eta, \tilde T) = 
\xi_0(\eta) \tilde T^{-\beta_2}(1+O(\tilde T^{-\beta_*}, \tilde T^{-\frac{1}2} \log \tilde T))$ 
as claimed.

Reversing the roles $(a_0, a_2) \leftrightarrow (b_2, b_0)$ as in the end of the proof of 
Proposition~\ref{prop:mix_regvar} gives
$\tilde \omega(\eta, \tilde T) = 
\omega_0(\eta) \tilde T^{-\beta_0}(1+O(\tilde T^{-\beta_*}, \tilde T^{-\frac{1}2} \log \tilde T))$. 
\end{proof}

The formula \eqref{eq:Dulac} for the Dulac maps follows directly from Theorem~\ref{thm:perturb} by inverting 
$T \mapsto \xi(\eta,T)$ and inserting this in the formula for $\omega(\eta,T)$.
In the special case that $\beta_0=\beta_2$, formula \eqref{eq:Dulac} reduces to
$$
\omega = D(\xi) = \left(\frac{b_2}{a_0}\right)^{\frac{1}{\beta_2-1}}
\left( \frac{\eta}{\zeta_0} \right)^{2\beta_2-1} \xi \left(1+\cO(\xi^{1-\frac{1}{2\beta_2}}, -\xi^{\frac{1}{2\beta_2}}
\log \xi)\right).
$$
Reducing further by assuming \eqref{eq:divergencefree} (i.e., in the volume preserving setting), we get 
$$
\omega = D(\xi) = \frac{b_2}{a_0}\left( \frac{\eta}{\zeta_0} \right)^3 \xi \left(1+\cO(-\xi^{\frac14}\log \xi)\right).
$$
This coefficient 
$\frac{b_2}{a_0}\left( \frac{\eta}{\zeta_0} \right)^3 = \frac{\| X_{hor}(0,\eta) \|}{\| X_{hor}(\zeta_0,0) \|}$ 
agrees with the fact that for $\omega = D(\xi)$, the flow-boxes $\cup_{t \in [0,\eps]} \phi_{hor}^t([0,\xi] \times \{\eta\})$
and $\cup_{t \in [0,\eps]} \phi_{hor}^t(\{\zeta_0\} \times [0,\omega])$
must have the same volume. If the neutral saddle $p$ is part of a heteroclinic cycle, then it is accumulated by 
periodic solutions, but these are not limit cycles of course.

\section{Time-$1$ map versus Poincar\'e map}\label{sec:Poincare}

First we give an estimate of observables integrated over the flow-lines of $X_{hor}$ of \eqref{eq:horivf}.

\begin{prop}\label{prop:r-integral}
 Let $r = \sqrt{x^2+y^2}$, $\rho > 0$ and $W(T)$ be the integral curve for \eqref{eq:horivf}
 connecting $(\xi(\eta_0, T), \eta_0))$ to $(\zeta_0, \omega(\eta_0, T)$,
 see Figure~\ref{fig:vf}. 
 Then there is a constant $C = C(\rho) > 0$ such that 
\begin{equation}\label{eq:r-integral}
\Theta := \int_{W(T)} r(t)^\rho dt =
\begin{cases}
C T^{1-\frac{\rho}{2}} (1+o(1)) & \text{ if } \rho < 2, \\
C \log(T) (1+o(1)) & \text{ if } \rho = 2, \\
C (1+o(1)) & \text{ if } \rho > 2.
\end{cases}
\end{equation}
\end{prop}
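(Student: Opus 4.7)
The plan is to reparametrize the integral by $M = y/x$, as in the proof of Proposition~\ref{prop:mix_regvar}. From the $M$-equation \eqref{eq:mix_M0} (with the $\cO(r)$ perturbation from \eqref{eq:M3a}) and $r^{2} = x^{2}(1+M^{2})$, reversing the direction of integration gives
$$
\Theta \;=\; \int_{M(T)}^{M(0)} \frac{x^{\rho-2}(1+M^{2})^{\rho/2}}{M(c_{0}+c_{1}M+c_{2}M^{2})}\bigl(1+\cO(r)\bigr)\,dM.
$$
The first integral (Lemma~\ref{lem:mix_lf}) together with \eqref{eq:x2} expresses $x^{2} = A\cdot M^{-1/\beta_{0}}\cdot H(M)$, where $A = A(\xi,\eta_{0}) \asymp \xi^{1/\beta_{2}}\eta_{0}^{1/\beta_{0}}$ and $H$ is positive with $H(0^{+}) \in (0,\infty)$ and $H(M) \asymp M^{-2+1/\beta_{0}+1/\beta_{2}}$ as $M \to \infty$. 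Substituting and collecting exponents,
$$
\Theta \;=\; A^{(\rho-2)/2}\!\int_{M(T)}^{M(0)}\!\Phi(M)\,dM\cdot(1+o(1)),
$$
for a positive $\Phi$ with $\Phi(M) \asymp M^{(2-\rho)/(2\beta_{0})-1}$ as $M \to 0$ and $\Phi(M) \asymp M^{(\rho-2)/(2\beta_{2})-1}$ as $M \to \infty$. By Theorem~\ref{thm:perturb}, $M(0) \asymp T^{\beta_{2}} \to \infty$, $M(T) \asymp T^{-\beta_{0}} \to 0$, and $A \asymp T^{-1}$.

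The three regimes of \eqref{eq:r-integral} correspond to the sign of $2-\rho$. If $\rho<2$, then $(2-\rho)/(2\beta_{0})-1>-1$ and $(\rho-2)/(2\beta_{2})-1<-1$, so $\Phi$ is integrable at both endpoints and $\int_{M(T)}^{M(0)}\Phi\,dM$ tends to a positive constant $C$; multiplying by $A^{(\rho-2)/2}\asymp T^{1-\rho/2}$ produces the stated asymptotic. If $\rho=2$, $\Phi(M)\asymp M^{-1}$ at both ends, so the integral grows like $\log(M(0)/M(T))\asymp(\beta_{0}+\beta_{2})\log T$, while $A^{0}=1$; hence $\Theta\asymp\log T$. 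If $\rho>2$, both endpoints contribute divergently: the lower end is of order $M(T)^{(2-\rho)/(2\beta_{0})}\asymp T^{(\rho-2)/2}$ and the upper end is of order $M(0)^{(\rho-2)/(2\beta_{2})}\asymp T^{(\rho-2)/2}$, and multiplying by $A^{(\rho-2)/2}\asymp T^{(2-\rho)/2}$ gives a finite constant.

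The main technical obstacle is justifying that the $\cO(4)$-perturbation of \eqref{eq:horivf}, which enters both the $M$-equation and the level sets of the first integral $\tilde L$ constructed in the proof of Theorem~\ref{thm:perturb}, contributes only to the $(1+o(1))$ factor. Since $r \lesssim T^{-1/2}$ uniformly on $W(T)$ (and strictly smaller in the bulk of the trajectory away from the diagonal $x=y$), the same splitting of $W(T)$ into incoming, mid-diagonal, and outgoing pieces employed in Section~\ref{sec:perturb} shows that the multiplicative $(1+\cO(r))$ factor integrates to a lower-order correction in each of the three regimes.
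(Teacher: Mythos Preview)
Your approach is essentially the same as the paper's: reparametrize by $M=y/x$, use \eqref{eq:x2} to express $x^{2}$ as (your $A$) $=G(\xi,\eta_{0})$ times a function of $M$, factor out $G^{(\rho-2)/2}$, and split into three cases according to the integrability of the resulting $M$-integrand at $0$ and $\infty$. One small caveat: the statement asks for a specific constant $C$, so your $\asymp$ assertions for $A$ and the $\rho\geq 2$ cases should be sharpened to genuine asymptotics $\sim$; this follows because $G(\xi(\eta_{0},T),\eta_{0})\cdot T$ converges (not just stays bounded) by the argument around \eqref{eq:mix_xi0}, and because the integrand has explicit leading coefficients at both ends.
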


\begin{proof}
 We build on the proof of Proposition~\ref{prop:mix_regvar} (or in fact Theorem~\ref{thm:perturb}), and in the integral
 $\Psi$ we change coordinates $M = y/x$.
 That is, $r^\rho = (x^2+y^2)^{\rho/2} = x^{\rho} (1+M^2)^{\rho/2}$.
 Use \eqref{eq:x2} to get
 $$
 x = G(T)^{\frac{1}{2}} M^{-\frac{v}{u+v+2 }} (c_0+c_1M+c_2M^2)^{\frac{-1}{u+v+2 }},
 $$
 with $G(T) := G(\xi(\eta_0, T)), \eta_0)$ as in \eqref{eq:mix_Exi}.
 Abbreviate $\xi(\eta_0, T) = \xi(T)$ and $\omega(\eta_0, T) = \omega(T)$.
Inserting the above in the integral of \eqref{eq:mix_M4}, we obtain
\begin{equation}\label{eq:Psi-integral}
\Psi = G(T)^{\frac{\rho}{2}-1} 
\int_{\omega(T)/\zeta_0}^{\eta/\xi(T)} 
\frac{ M^{-\frac{\rho v}{u+v+2}} (c_0+c_1M+c_2M^2)^{\frac{-\rho}{u+v+2 }} (1+M^2)^{\frac{\rho}{2}}}
{  M^{1-\frac{1}{\beta_0}}  \left(c_0+c_1M+c_2 M^2  \right)^{ \frac{1}{2\beta_0} + \frac{1}{2\beta_2} } }
\ dM.
\end{equation}
For $M \to 0$, the leading term in the integrand is
$$
c_0^{-1+(1-\frac{\rho}{2}) \frac{1}{\beta_0}} 
M^{\frac{1}{\beta_0}-1 - \rho \frac{v}{u+v+2} } = 
c_0^{-1+(1-\frac{\rho}{2}) \frac{1}{\beta_0}}  M^{(1-\frac{\rho}{2})\frac{1}{\beta_0} - 1},
$$
i.e., the exponent is $> -1$ for $\rho < 2$.
For $M \to \infty$, the leading term in the integrand is
$$
c_2^{-1+(1-\frac{\rho}{2}) \frac{1}{\beta_0}}  
M^{-\frac{1}{\beta_2}-1 - \rho(\frac{v}{u+v+2} + \frac{2}{u+v+2} -1) } = 
c_0^{-1+(1-\frac{\rho}{2}) \frac{1}{\beta_0}}  M^{-(1-\frac{\rho}{2})\frac{1}{\beta_2} - 1},
$$
i.e., the exponent is $< -1$ for $\rho < 2$.
This means that the integral in \eqref{eq:Psi-integral} converges to some constant $C_0 = C_0(\rho)$
as $T \to \infty$,
and $\Psi \sim C_0  G(T)^{\frac{\rho}{2}-1}  \sim C T^{1-\frac{\rho}{2}}$
for $C = C_0 \left( c_2^{1-\frac{1}{2 \beta_0} - \frac{1}{2 \beta_2}} 
\xi_0^{\frac{1}{\beta_0}} \eta_0^{1-\frac{1}{\beta_2}} \right)^{1-\frac{\rho}{2}}$.
This finishes the proof for $\rho < 2$.

If $\rho > 2$, then the value of $\Psi$ based on the leading terms of the integrand only,
is
$$
\Psi = G(T)^{\frac{\rho}{2}-1} 
\frac{c_0^{-1+(1-\frac{\rho}{2}) \frac{1}{\beta_0}} } {\frac{\rho}{2}-1} 
\left( \beta_2 
\left( \frac{\eta_0}{\xi(T)}\right)^{-(1-\frac{\rho}{2})\frac{1}{\beta_2}}
- \beta_0
\left(\frac{\omega(T)}{\zeta_0} \right)^{(1-\frac{\rho}{2})\frac{1}{\beta_0}} \right).
$$
Insert the values of $\xi(T)$ and $\omega(T)$ from Proposition~\ref{prop:mix_regvar}
as well as the leading term of $G(T)$:
$$
\Psi = \left( c_2^{1-\frac{1}{2 \beta_0} - \frac{1}{2 \beta_2}} 
\xi_0^{\frac{1}{\beta_0}} \eta_0^{1-\frac{1}{\beta_2}} \right)^{1-\frac{\rho}{2}} 
T^{1-\frac{\rho}{2}}
\frac{c_0^{-1+(1-\frac{\rho}{2}) \frac{1}{\beta_0}} } {\frac{\rho}{2}-1} 
\left( \beta_2 {\eta_0}^{(\frac{\rho}{2}-1)\frac{1}{\beta_2}}
- \beta_0 \zeta_0^{(\frac{\rho}{2}-1)\frac{1}{\beta_0}} \right) T^{\frac{\rho}{2}-1}.
$$
The powers of $T$ cancel in this expression, proving the case $\rho > 2$.
Finally, if $\rho = 2$, then the factor $G(T)^{\frac{\rho}{2}-1}$ in \eqref{eq:Psi-integral}
disappears and the leading terms in the integrand (both as $M \to 0$ and $M \to \infty$),
are $c_0^{-1} M^{-1}$.
This gives, due to Proposition~\ref{prop:mix_regvar},
$$
\Psi \sim \frac{1}{c_0} \left( \log \frac{\eta_0}{\xi(T)} - \log \frac{\omega(T)}{\zeta_0} \right)
\sim \frac{\beta_2+\beta_0}{c_0} \log T.
$$
\end{proof}

The $3$-dimensional time-$1$ map $\phi^1$ preserves no $2$-dimensional submanifold of $\cM$. 
Yet in order to model $\phi^t$ as a suspension flow over a $2$-dimensional map, we need a genuine
Poincar\'e map.
For this we choose a section $\Sigma$ transversal to $\Gamma$ and containing a neighbourhood $U$ of $p$.
As an example, $\Sigma$ could be $\T^2 \times \{ 0 \}$, and the Poincar\'e map to $\T^2 \times \{ 0 \}$ 
could be (a local perturbation of) Arnol'd's cat map; in this case (and most cases)
$\cM$ is not homeomorphic to $\T^3$ because the homology is more complicated, see \cite{BF13, N76}.

Let $h:\Sigma \to \R^+$, $h(q) = \min\{ t > 0 : \phi^t(q)  \in \Sigma\}$ be the first return time.
Assuming that $\sup_\Sigma |w(x,y)| < 1$, the
first return time $h$ is bounded and bounded away from zero, say
$0 < \inf_{\Sigma} h < \sup_{\Sigma} h$.

The Poincar\'e map $f := \phi^h: \Sigma \to \Sigma$ 
has a neutral saddle point $p$ at the origin. Its local stable/unstable manifolds
are $W^s_{loc}(p) = \{ 0 \} \times (-\eps,\eps)$ and 
$W^u_{loc}(p) = (-\eps,\eps) \times \{ 0 \}$.
Because the flow $\phi^t$ is a perturbation of an Anosov flow, and $f$ is 
a Poincar\'e map, it has a finite Markov partition $\{P_i\}_{i \geq 0}$
and we can assume that $p$ is in the interior of $P_0$.
In the sequel, let $U$ be a neighbourhood of $p$ that is small enough that 
\eqref{eq:polyvf} is valid on $U \times [0,1]$ but also that $f(U) \supset \hat P_0 \cup P_0$.

In order to regain the hyperbolicity lacking in $f$, let 
\begin{equation}\label{eq:firstreturn}
\rf(q) := \min\{ n \geq 1 : f^n(q) \in Y \}
\end{equation}
be the first return time to $Y := \Sigma \setminus P_0$.
Then the Poincar\'e map $F = f^{\rf} = \phi^\tau$ of $\phi^t$ to
$Y \times \{ 0 \}$ is hyperbolic, where
\begin{equation*}
\tau(q) = \min\{ t > 0 : \phi^t((q,0)) \in Y \times \{ 0 \} \} = \sum_{j=0}^{\rf-1} h \circ f^j 
\end{equation*}
is the corresponding first return time. 

Consequently, the flow $\phi^t:\cM \times \R \to \cM$ can be modeled as a suspension flow
on $Y^\tau = \left( \bigcup_{q \in Y} \{ q \} \times [0,\tau(q)) \right)/(q,\tau(q)) \sim (F(q),0)$.
Since the flow and section $Y \times \{ 0 \}$ are $C^1$ smooth,
$\tau$ is $C^1$ on each piece $\{\rf = k\}$.

\begin{lemma}\label{lem:tau}
In the notation of Proposition~\ref{prop:r-integral} with $\theta = w$, we have
$\tau(q) = \hat\tau(q) + O(1)$ and $\rf = \hat\tau(q)+\Theta(\hat\tau(q))+O(1)$.
\end{lemma}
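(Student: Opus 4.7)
\noindent
The plan is to exploit that \eqref{eq:polyvf} decouples the horizontal dynamics from the $z$-direction in local coordinates near $\Gamma$, so that the projection $(x(t),y(t))$ of a $\phi^t$-orbit solves the horizontal ODE while $\dot z = 1+w(x,y)$. Fix $q\in Y$ whose forward orbit enters $P_0$, and let $\hat\tau(q)$ be the flow-time the horizontal orbit through $q$ spends inside $P_0$ (with $\hat\tau(q)=0$ and $\rf(q)=1$ giving the trivial case). Let $0<t_1<t_2<\cdots$ be the flow times at which $\phi^t(q)\in\Sigma$, so that $f^n(q)=\phi^{t_n}(q)$ and $\tau(q)=t_{\rf(q)}$. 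Integrating $\dot z=1+w$ over $[0,t_n]$ yields the key identity
\begin{equation*}
n \;=\; t_n + \int_0^{t_n} w(x(s),y(s))\,ds.
\end{equation*}

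For the first estimate, note that $\sup_\Sigma|w|<1$ forces $\dot z$ to be bounded below by a positive constant, so consecutive differences $t_{n+1}-t_n$ are uniformly bounded. The horizontal orbit exits $P_0$ at time $\hat\tau(q)$, and thereafter the trajectory takes uniformly bounded flow time to return to $Y$, since outside a neighbourhood of $\Gamma$ the flow is uniformly hyperbolic with $\sup_Y h<\infty$. Combining with the short initial piece before the orbit enters $P_0$, I get $\hat\tau(q)\leq t_{\rf}\leq\hat\tau(q)+O(1)$, i.e. $\tau(q)=\hat\tau(q)+O(1)$.

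For the second estimate, I evaluate the identity at $n=\rf$ and substitute $\tau=\hat\tau+O(1)$:
\begin{equation*}
\rf \;=\; \hat\tau(q) + \int_0^{\hat\tau(q)} w(x(s),y(s))\,ds + O(1),
\end{equation*}
where the $O(1)$ absorbs both $\tau-\hat\tau$ and the integral of $w$ over $[\hat\tau,\tau]$, on which the horizontal orbit has already left a small neighbourhood of $p$ and so $r$ is bounded below. Since $w$ is a linear combination of homogeneous monomials in $(x,y)$ vanishing at $(0,0)$, the change of variables $M=y/x$ used in the proof of Proposition~\ref{prop:r-integral} applies monomial-by-monomial with $r^\rho$ replaced by the relevant term of $w$; integrating along the horizontal Dulac trajectory $W(\hat\tau)$ gives $\int_0^{\hat\tau(q)} w\circ\phi_{hor}^s\,ds = \Theta(\hat\tau(q))+O(1)$, and thus $\rf=\hat\tau(q)+\Theta(\hat\tau(q))+O(1)$.

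The main obstacle is matching the intrinsically defined horizontal Dulac time $\hat\tau(q)$ to the flow time accumulated between the first and last $\Sigma$-crossings that occur while the orbit sits inside $P_0$; the equality up to $O(1)$ reduces to the uniform lower bound on $\dot z$ and the finiteness of the Markov partition $\{P_i\}$, so that the segments of orbit lying outside the local coordinate patch (where \eqref{eq:polyvf} is not in force) carry only bounded flow-time and bounded $z$-increment, and are therefore absorbed into the error terms throughout the argument.
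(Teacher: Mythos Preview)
Your argument is correct and follows essentially the same route as the paper: the identity $n = t_n + \int_0^{t_n} w$ is exactly the paper's observation that ``$\hat\tau(q)+\Theta(\hat\tau(q))$ indicates the vertical displacement'' and hence counts $\Sigma$-crossings, and your bound $\tau=\hat\tau+O(1)$ is the paper's one-line remark that after the horizontal orbit exits $P_0$ only bounded flow-time is needed to hit $Y\times\{0\}$. The paper's proof is just a two-sentence sketch of what you have spelled out in detail.
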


\begin{proof}
By the definition of $\hat\tau$ we have $\phi^{\hat\tau}_{hor}(q) \in \hat W^s$. Therefore it takes a bounded amount of
 time (positive or negative) for $\phi^{\hat\tau}(q,0)$ to hit $Y \times \{ 0 \}$, so $|\tau(q)-\hat\tau(q)| = O(1)$.
 
If in \eqref{eq:r-integral} we set $\theta = w$, then $\hat\tau(q) + \Theta(\hat\tau(q))$ indicates the vertical 
displacement under the flow $\phi^t$. In particular, it gives the number of times the flow-line intersects 
$\Sigma$, and hence $\rf = \hat\tau(q) + \Theta(\hat\tau(q)) + O(1)$. 
\end{proof}

\begin{figure}[ht]
\begin{center}
\begin{tikzpicture}[scale=1.5]
\node at (4.3,-0.1) {\small $x$}; 
\node at (-0.1,4.3) {\small $y$}; \node at (-0.2,2.8) {\small $y_1$};\node at (-0.2,4) {\small $y_2$};
\draw[->, draw=red] (-0.5,0)--(4.4,0);
\draw[->, draw=blue] (0,-0.5)--(0,4.4);
\node at (0.73,3.4) {\tiny $\{ \rf = k\}$};
\node at (3.4,0.5) {\tiny $F(\{\rf = k\})$};
\node at (1.5,1.5) {$P_0$};
\node at (1.5,2.7) {\tiny $W^u$}; \node at (1.2,4) {\tiny $f^{-1}(W^u)$};
\node at (2.98,1.3) {\tiny $W^s$}; \node at (4.3,0.8) {\tiny $f(W^s)$};
\draw[-, draw=red] (0,4)--(0.8,4); \draw[-, draw=blue] (4,0)--(4,0.8); 
\draw[-, draw=blue] (0.1,4) .. controls (0.11,2.5) and (0.4,1.5) .. (0.5,-0.2);
\draw[-, draw=blue] (0.2,4) .. controls (0.21,2.5) and (0.5,1.5) .. (0.6,-0.2);
\draw[-, draw=blue] (0.3,4) .. controls (0.31,2.5) and (0.6,1.5) .. (0.7,-0.2);
\draw[-, draw=blue] (2.8,2.8)--(2.8,-0.0);
\draw[-, draw=red] (4, 0.1) .. controls (2.5, 0.11) and (1.5, 0.4) .. (-0.2, 0.5);
\draw[-, draw=red] (4, 0.2) .. controls (2.5, 0.21) and (1.5, 0.5) .. (-0.2, 0.6);
\draw[-, draw=red] (4, 0.31) .. controls (2.5, 0.32) and (1.5, 0.61) .. (-0.2, 0.72);
\draw[-, draw=red] (2.8,2.8)--(-0.0,2.8);
\draw[->, draw=black] (0.085,4) .. controls (0.27,0.5) and (0.5,0.27) .. (4,0.085);
\end{tikzpicture}
\caption{The first quadrant of the rectangle $P_0$, with stable and unstable foliations of 
Poincar\'e map $f = \phi^h$
drawn vertically and horizontally, respectively. Also one of the integral curves is drawn.}
\label{fig:leaves2}
\end{center}
\end{figure}
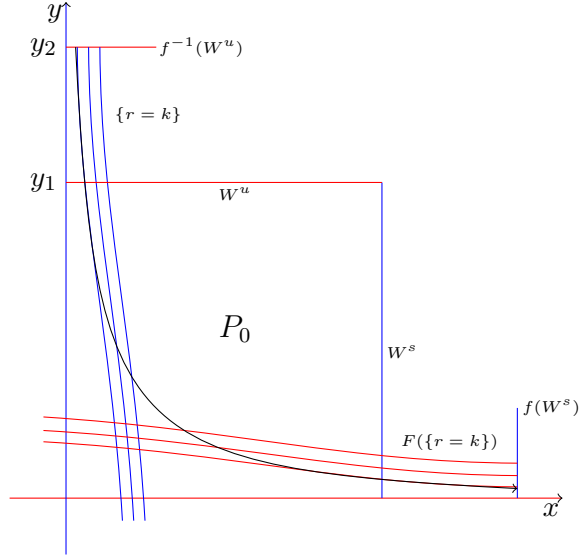 

Assume that $\phi^t$ and $\hat f$ preserve Lebesgue measure.

\begin{prop}\label{prop:tailtau}
Recall that $\beta_2 = \frac{a_2+b_2}{2b_2} \in (\frac12, \infty)$.
There exists $C^* > 0$ such that
\begin{equation}\label{eq:asymp3}
\Leb(\{ \tau > t \}) = C^* t^{-\beta_2} (1+o(1))
\end{equation}
for the $F$-invariant SRB-measure $\mu_{\bar\phi}$.
\end{prop}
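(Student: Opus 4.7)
The plan is to reduce the $2$-dimensional Lebesgue measure of $\{\tau>t\}\subset Y$ to the $1$-dimensional Dulac-time estimate of Theorem~\ref{thm:perturb} via Fubini. First I would use Lemma~\ref{lem:tau} to replace $\tau$ by the horizontal Dulac time $\hat\tau$: since $\tau=\hat\tau+O(1)$ uniformly on $Y$, the set $\{\tau>t\}$ differs from $\{\hat\tau>t\}$ by a bounded time-shift, which under any $t^{-\beta_2}$ asymptotic contributes only a factor $1+O(t^{-1})$, absorbable into $o(1)$. Hence it suffices to estimate $\Leb(\{\hat\tau>T\})$.

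Next I would locate $\{\hat\tau>T\}$ inside $Y$. Outside a small neighbourhood of $p$ the flow is uniformly hyperbolic and the roof $h$ is bounded, so $\hat\tau$ is uniformly bounded there, and for $T$ large the set $\{\hat\tau>T\}$ lies in the four pieces of $Y$ adjacent to $P_0$. By the symmetry $(x,y)\mapsto(\pm x,\pm y)$ of \eqref{eq:horivf} it is enough to treat the first-quadrant piece $\tilde Q:=[0,\zeta_0]\times[\eta_0,\eta_1]\subset Y$ sitting above $Q$. For $q=(x,y)\in\tilde Q$, the first integral of Lemma~\ref{lem:mix_lf} pins down the entire integral curve through $q$, and $\hat\tau(q)$ equals the Dulac time for the transversal point with $\xi=x$, $\eta=y$. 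Since $T\mapsto\xi(\eta,T)$ is decreasing, the condition $\hat\tau(q)>T$ is equivalent to $x<\xi(y,T)$.

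Applying Fubini in the $(x,y)$-coordinates and then Theorem~\ref{thm:perturb} yields
\begin{equation*}
\Leb(\{\hat\tau>T\}\cap\tilde Q)\;=\;\int_{\eta_0}^{\eta_1}\!\xi(\eta,T)\,d\eta
\;=\;T^{-\beta_2}\!\int_{\eta_0}^{\eta_1}\!\xi_0(\eta)\,d\eta\,(1+o(1)),
\end{equation*}
where the $o(1)$ absorbs the error $O(T^{-\beta_*},T^{-1/2}\log T)$, uniform in $\eta$ on the compact interval $[\eta_0,\eta_1]$. Multiplying by four for the other quadrants and invoking the first paragraph's reduction gives $\Leb(\{\tau>t\})=C^*t^{-\beta_2}(1+o(1))$ with $C^*:=4\int_{\eta_0}^{\eta_1}\xi_0(\eta)\,d\eta>0$.

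The main technical point I foresee is verifying that the Poincar\'e section $\Sigma$ is aligned closely enough with $\{z=\text{const}\}$ near $p$ that $(x,y)$ serve as genuine coordinates on $\Sigma\cap U$ and the Dulac set $\{x<\xi(y,T)\}$ really coincides, up to $o(T^{-\beta_2})$-mass, with $\{\hat\tau>T\}\cap\tilde Q$ inside $\Sigma$. Because the flow has a nontrivial vertical component $1+w$, choosing $\Sigma$ locally to coincide with $\{z=0\}$ around $p$ makes the identification exact; any bounded transport from a generic $q\in Y$ to this local chart contributes only $O(1)$ time-shifts already absorbed into Lemma~\ref{lem:tau}.
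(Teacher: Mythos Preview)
Your argument is correct and reaches the same conclusion as the paper, but by a slightly different route. The paper disintegrates Lebesgue on $Y_{\{\rf\ge 2\}}$ along the unstable foliation $\{W^u(y)\}$ of the Poincar\'e map $f$, notes that these leaves do not coincide with the horizontal unstable leaves $\hat W^u(y)$ of the time-$1$ map $\hat f=\phi_{hor}^1$, and introduces the angle $\gamma(y)$ between them; the one-dimensional leaf measure of $W^u(y)\cap\{\tau>t\}$ is then $|\cos\gamma(y)|\,\xi_0(y)\,t^{-\beta_2}(1+o(1))$, and integrating against the transverse measure $d\nu^u$ gives $C^*=\int_{y_1}^{y_2}|\cos\gamma(y)|\,\xi_0(y)\,d\nu^u(y)$. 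You instead apply plain Fubini in the $(x,y)$ chart on $\Sigma$, obtaining $C^*=4\int_{\eta_0}^{\eta_1}\xi_0(\eta)\,d\eta$ directly. Since both are computing the same Lebesgue mass, the two expressions for $C^*$ agree; your Cartesian slicing simply absorbs the $|\cos\gamma|$ Jacobian into the choice of transversal parametrisation.

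What each approach buys: the paper's foliation decomposition is the natural one if the invariant measure were a genuine SRB measure (with nontrivial densities along unstable leaves and a possibly singular transverse factor), and it makes explicit that the relevant geometry is that of $f$, not $\hat f$. Your Cartesian Fubini is shorter and exploits directly that the measure is Lebesgue, so no Jacobian bookkeeping is needed; it also matches the way the constant is written later in the proof of Corollary~\ref{cor:LimitLaws}. The technical caveat you flag in your last paragraph---that $\Sigma$ must be taken as $\{z=0\}$ near $p$ so that $(x,y)$ are honest coordinates and the upper boundary $f^{-1}(W^u)$ meets the strip $\{x<\xi(y,T)\}$ only in an $o(T^{-\beta_2})$ sliver---is exactly what the paper handles implicitly by working with the $f$-foliation; you should keep that remark, since without it the identification $\{\hat\tau>T\}=\{x<\xi(y,T)\}$ on $\tilde Q$ is only approximate.
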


\begin{proof}
The function $\tau$ is defined on $\Sigma \setminus P_0$ and $\tau \geq h_2 = h + h \circ f$ on
$Y_{\{\rf \geq 2\}} := f^{-1}(P_0) \setminus P_0$.
The set $Y_{\{\rf \geq 2\}}$ is a rectangle with boundaries consisting of two 
stable and two unstable leaves of the Poincar\'e map  $f$.
Let $W^u(y)$ denote the unstable leaf of $f$ inside $Y_{\{\rf \geq 2\}}$
with $(0,y)$ as (left) boundary point.
Let $y_1 < y_2$ be such that $W^u(y_1)$ and $W^u(y_2)$ are the unstable
boundary leaves of $Y_{\{\rf \geq 2\}}$.

The unstable foliation of $\hat f = \phi_{hor}^1$ does not entirely coincide with the unstable foliation of $f$.
Let $\hat W^u(y)$ denote the unstable leaf of $\hat f$ 
with $(0,y)$ as (left) boundary point. Both $\hat W^u(y)$ and $W^u(y)$ are $C^1$ curves emanating from $(0,y)$; 
let $\gamma(y)$ denote the angle between them. Then the lengths
\begin{eqnarray*}
\Leb(W^u(y) \cap \{ \tau > t\}) &=& |\cos \gamma(y)|\ \Leb(\hat W^u(y) \cap \{ \tau > t\}) (1+o(1)) \\
&=& |\cos \gamma(y)|\ \xi_0(y)\ t^{-\beta_2} (1+o(1))
\end{eqnarray*}
as $t\to\infty$, where the last equality and the notation $\xi_0(y)$ and  $\beta_2 = (a_2+b_2)/(2b_2)$ come
from Theorem~\ref{thm:perturb}

We decompose Lebesgue on $Y_{\{\rf \geq 2\}}$ as 
$$
\int_{Y_{\{\rf \geq 2\}}} v\, d\mu_{\bar\phi} = 
\int_{y_1}^{y_2} \left( \int_{W^u(y) } v\, d\mu^s_{W^u(y)} \right) d\nu^u(y).
$$
The conditional measures $\mu_{W^u(y)}$ on $W^u(y)$
equals $1$-dimensional Lebesgue $m_{W^u(y)}$ on $W^u(y)$
Therefore, as $t \to \infty$,
\begin{eqnarray*}
 \mu_{\bar\phi}(\tau > t) &=& \int_{y_1}^{y_2} \mu_{W^u(y)}(W^u(y) \cap \{ \tau > t\} ) \, d\nu^u(y) \\
  &=& \int_{y_1}^{y_2} m_{W^u(y)}( W^u(y) \cap \{ \tau > t\} ) \, d\nu^u(y) \\
   &=& \int_{y_1}^{y_2} |\cos \gamma(y)|\ m_{\hat W^u(y)}(\hat W^u(y) \cap \{ \tau > t\} ) (1+o(1)) \, d\nu^u(y) \\
    &=& \int_{y_1}^{y_2} |\cos \gamma(y)|\ \xi_0(y)\ t^{-\beta_2} (1+o(1)) \, d\nu^u(y) 
    = C^* t^{-\beta_2}(1+o(1)),
\end{eqnarray*}
for $C^* = \int_{y_1}^{y_2} |\cos \gamma(y)| \ \xi_0(y) \, d\nu^u(y)$.
This proves the result.
\end{proof}

\section{The proof of Corollary~\ref{cor:LimitLaws} }
\label{sec:proof}

\begin{proof}
The proof of Corollary~\ref{cor:LimitLaws} is a direct application of Theorem 2.7 in \cite{BTT18},
where $\bar v = \int_0^\tau v \circ \phi^t\, dt$ takes the role of $\bar \psi$ in
\cite[Theorem 2.7]{BTT18}, but the condition that $\bar \psi = C-\psi_0$ for some positive $\psi_0$ is only
important for the results on the shape of the pressure function in \cite{BTT18}.
For us, only the tail of $\bar v$ matters and since $v$ is $C^1$ on $\cM \setminus \Gamma$,
$\bar v$ is $C^1$ on each partition element $\{ \phi = n\}$ of the Markov map $F$.
Since Proposition~\ref{prop:r-integral} applies to $v$ we get
$\bar v(x,y) \sim C_p T^{1-\frac{\rho}{2}}$ if the Dulac time of $(x,y)$ is $T$.
Since our invariant measure is Lebesgue, and $\beta_2 = 2$,
Theorem~\ref{thm:perturb} can be immediately used to estimate
$$
\Leb(\bar v > t) \sim \frac{\int_{\eta_0}^{\eta_1} \xi_0(\eta) \, d\eta}{\mbox{Vol }(\Sigma \setminus P_0)} 
\left( \frac{t}{C_p} \right)^{\frac{-4}{2-\rho } },
$$
where $\Sigma$ is the Poincar\'e section and $\mbox{Vol }(\Sigma \setminus P_0)$ is the normalizing 
constant for Lebesgue restricted to the domain $\Sigma \setminus P_0$ of $F$.
If $\rho \geq 2$, this asymptotic formula should be interpreted as $\Leb(\bar v > t) = 0$ for $t$ large, 
that is: $\bar v$ is bounded.

The exponent of this tail is $-2$ if and only if $\rho = 0$, and in this case
\cite[Theorem 2.7(a)(ii)]{BTT18} gives the non-Gaussian CLT.

If $-2 < \rho < 0$, \cite[Theorem 2.7(a)(i)]{BTT18} gives a Stable Law of order $4/(2-\rho) \in (1,2)$.

Finally, if $0 < \rho < 2$ (or $\rho \geq 2$ when $\bar v$ is bounded), 
then we obtain the CLT provided the variance $\sigma^2 > 0$, 
and this follows from $\bar v$ not being a coboundary.
In other words, $\bar v \neq h - h \circ F$ for any $h \in \cB$, the Banach space used in the proofs
of \cite{BTT18}, and this we assumed explicitly.
\end{proof}

\noindent
{\bf Acknowledgements:} We gratefully acknowledge the support of FWF grant P31950-N45.

\end{document}